\newtheorem{thm}{Theorem}
\newtheorem{crl}{Corollary}
\newtheorem{cnj}{Conjecture}
\newtheorem{lmm}{Lemma}
\newtheorem{rmk}{Remark}
\newtheorem{dfn}{Definition}
\newcommand{\subjclass}[2][2010]{%
  \let\@oldtitle\@title%
  \gdef\@title{\@oldtitle\footnotetext{#1 \emph{Mathematics subject classification(s).} #2}}%
}
\newcommand{\keywords}[1]{%
  \let\@@oldtitle\@title%
  \gdef\@title{\@@oldtitle\footnotetext{\emph{Key words and phrases.} #1.}}%
}
\title{Fermat curves and the reciprocity law on cyclotomic units}
\author{Tomokazu Kashio\thanks{Tokyo University of Science, \texttt{kashio\_tomokazu@ma.noda.tus.ac.jp}}}
\subjclass{11M06, 11M35, 11R27, 11R42, 11S80, 14H45}
\keywords{Stark's conjecture, cyclotomic units, ($p$-adic) gamma function, ($p$-adic) beta function, Fermat curves, CM-periods, $p$-adic periods}
\begin{document}
\maketitle

\begin{abstract}
We define a ``period ring-valued beta function'' and give a reciprocity law on its special values.
The proof is based on some results of Rohrlich and Coleman concerning Fermat curves. 
We also have the following application.
Stark's conjecture implies that the exponential of the derivatives at $s=0$ of partial zeta functions are algebraic numbers which satisfy a reciprocity law
under certain conditions.
It follows from Euler's formulas and properties of cyclotomic units when the base field is the rational number field.
In this paper, we provide an alternative (and partial) proof by using the reciprocity law on the period ring-valued beta function. 
In other words, the reciprocity law given in this paper is a refinement of the reciprocity law on cyclotomic units.
\end{abstract}

\section{Introduction}

We first recall the rank $1$ abelian Stark conjecture. 
Let $K/F$ be an abelian extension of number fields with $G:=\mathrm{Gal}(K/F)$.
Let $S$ be a finite set of places of $F$ containing all infinite places and all ramifying places. 
Then the partial zeta function associated to $S,\sigma \in G$ is defined by
\begin{equation*}
\zeta_S(s,\sigma):=\sum_{\mathfrak a \subset \mathcal O_F,(\mathfrak a,S)=1,\left(\frac{K/F}{\mathfrak a}\right)=\sigma}N\mathfrak a ^{-s}.
\end{equation*}
Here $\mathfrak a$ runs over all integral ideals of $F$ relatively prime to all primes in $S$ 
whose image under the Artin symbol $(\tfrac{K/F}{* })$ is equal to $\sigma$.
This series converges when $\mathrm{Re}(s)>1$ and has a meromorphic continuation to the whole complex $s$-plane.

\begin{cnj}[The rank 1 abelian Stark conjecture]
Assume that 
\begin{center}
{\rm(Asmp)} \quad $S$ contains a place $v$ which splits completely in $K/F$ and $|S|\geq 2$.
\end{center}
We choose a place $w$ of $K$ lying above the splitting place $v$ and put $e_K$ to be the number of roots of unity contained in $K$.
Then there exists an element $\varepsilon$, which is called a Stark unit, satisfying the following conditions:
\begin{itemize}
\item[{\rm(Alg)}] $\varepsilon \in K^\times$.
\item[{\rm(Unit)}] If $|S|>2$, then $\varepsilon$ is a $\{v\}$-unit. 
Otherwise, we can write $S=:\{v,v'\}$ and $\varepsilon$ is an $S$-unit satisfying that $|\varepsilon|_{w'}$ is a constant for all places $w' $ of $K$ 
lying above $v'$.
\item[{\rm(Rec)}] $\log |\varepsilon^\sigma|_w=-e_K \zeta'_S(0,\sigma)$ for all $\sigma \in G$.
\item[{\rm(Abel)}] $K\left(\varepsilon^{\frac{1}{e_K}}\right)/k$ is an abelian extension. 
\end{itemize}
\end{cnj}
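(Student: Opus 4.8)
The plan is to treat condition (Rec) as the primary datum and to read the remaining three conditions off a single algebraic object built from it. Because the distinguished place $v$ splits completely in $K/F$, the places of $K$ above $v$ are permuted simply transitively by $G$, so the family of real numbers $\{-e_K\zeta'_S(0,\sigma)\}_{\sigma\in G}$ prescribes the full archimedean absolute-value vector $\{\log|\varepsilon^\sigma|_w\}_{\sigma\in G}$ of the element we seek. Thus the first and decisive task is to manufacture $\varepsilon\in K^\times$ whose $w$-absolute values at all $G$-conjugates realize this vector and whose valuations at the remaining places of $S$ are the ones forced by (Unit). Concretely, I would first produce an explicit analytic formula for $\zeta'_S(0,\sigma)$ and then recognize
\[
\exp\bigl(-e_K\,\zeta'_S(0,\sigma)\bigr)=\bigl|\varepsilon^\sigma\bigr|_w
\]
as the archimedean absolute value of a \emph{period}, so that $\varepsilon$ emerges as an explicit algebraic combination of such periods.

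The mechanism for this is the period ring-valued beta function $B$ together with its reciprocity law. Using a Shintani-type cone decomposition of the partial zeta function, I would write $\zeta'_S(0,\sigma)$ as a finite sum of values of a (multiple) log-gamma function at rational arguments; these are exactly the archimedean logarithms packaged, together with their Galois and $p$-adic avatars, by the special values of $B$. The candidate $\varepsilon$ is then taken to be a suitable monomial in special values of $B$. Here the Fermat-curve input enters decisively: Rohrlich's evaluation of the periods of Fermat curves identifies these special values, up to explicit CM-periods, with periods of CM abelian varieties, and Coleman's $p$-adic analogue supplies the matching $p$-adic data. The upshot is that the special values of $B$ simultaneously encode the algebraic number $\varepsilon$ and the transcendental period measuring its $w$-absolute value; disentangling the two via the reciprocity law produces the candidate.

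Granting this construction, the four conditions would follow from the structure of the reciprocity law. For (Alg) and (Abel), the Galois action on the period ring in which $B$ takes its values factors through Jacobi and Gauss sums, i.e.\ through the action on torsion points of the Fermat Jacobian; this shows that the ratio defining $\varepsilon$ is $\mathrm{Gal}(\overline F/K)$-invariant up to roots of unity, hence $\varepsilon\in K^\times$ after adjusting by an $e_K$-th root, and that $K(\varepsilon^{1/e_K})/F$ is abelian because the relevant Kummer theory is governed by the same sums. For (Rec), comparing the archimedean realization of the reciprocity law with the Shintani expression for $\zeta'_S(0,\sigma)$ yields $\log|\varepsilon^\sigma|_w=-e_K\zeta'_S(0,\sigma)$; this archimedean limit formula is precisely the refinement of the classical cyclotomic-unit identity advertised in the abstract. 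For (Unit), the $p$-adic realization of $B$ (Coleman's $p$-adic beta function) computes $\mathrm{ord}_{w'}(\varepsilon)$ at the finite places of $S$, giving the asserted $\{v\}$- or $S$-unit property.

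The step I expect to be the genuine obstacle is the reduction in the second paragraph for a \emph{general} base field $F$. Expressing $\zeta'_S(0,\sigma)$ as the logarithm of the absolute value of an algebraic period, and matching it to a special value of $B$, rests on a limit formula of Kronecker/Lerch type; such a formula is available unconditionally when $F=\mathbb Q$ (Lerch's formula together with the cyclotomic Fermat curves) and, with substantial additional work, in the CM case via Yoshida's period formula, but for an arbitrary totally real or arbitrary number field there is at present no limit formula realizing $\zeta'_S(0,\sigma)$ as a period of an algebraic object, and the Fermat-curve machinery is intrinsically cyclotomic. Overcoming this gap — producing $\varepsilon$ in full generality as a monomial in special values of the period ring-valued beta function, with the CM-periods supplying its $w$-absolute value and the algebraic part supplying the unit itself — is the crux on which a complete proof along these lines would turn, and it is exactly the point responsible for the conjectural status of (Rec).
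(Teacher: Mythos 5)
The statement you were asked to prove is the rank~1 abelian Stark conjecture, and the paper does not prove it --- it is presented as an open conjecture, and no proof is known for a general totally real $F$. What the paper actually establishes is far weaker: only for $F=\mathbb Q$, only for the two conditions (Alg) and (Rec) in their derived form (Alg$'$), (Rec$'$) of Conjecture \ref{wsc}, and with (Rec$'$) obtained only modulo roots of unity (Corollaries \ref{crl1} and \ref{crl2}). Your proposal correctly identifies the paper's machinery --- Rohrlich's period formula, Coleman's Frobenius computation, the period ring-valued beta function $\mathfrak B$ and its reciprocity law (Theorem \ref{main3}) --- but misrepresents it as an engine for the full conjecture. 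Your own closing paragraph concedes the decisive step (a limit formula realizing $\zeta_S'(0,\sigma)$ as a period of an algebraic object for general $F$) is unavailable, so what you have written is a research program, not a proof, and it should be assessed as failing to prove the statement.

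Even restricted to $F=\mathbb Q$, three steps of your plan break down concretely. First, the root-of-unity ambiguity is intrinsic to the method, not an adjustable nuisance: the choice of $\exp_p(\frac{1}{p^e})$, the $p$-adic gamma function of Lemma \ref{pgamma}, and Coleman's formula in the bad-reduction case (Theorem \ref{mt1}-(ii)) are each defined or valid only up to $\mu_\infty$, so Theorem \ref{main3}-(ii) and everything downstream hold only $\bmod\ \mu_\infty$; the exact condition (Rec) is therefore unreachable by this route, and the paper recovers it classically only through Euler's formulas (\ref{erf}), (\ref{ef}) and cyclotomic units --- precisely the inputs the alternative proof is designed to avoid. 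Second, your claim that the $p$-adic realization of the beta function computes $\mathrm{ord}_{w'}(\varepsilon)$ at finite places and hence yields (Unit) has no support: $B_p$ takes values in $\mathcal O_{\overline{\mathbb Q_p}}^\times$ by construction, the $p$-adic periods live in $B_{\mathrm{cris}}\overline{\mathbb Q_p}$, and the paper never extracts valuations of $\varepsilon$; conditions (Unit) and (Abel) are simply not treated, and your Gauss/Jacobi-sum Kummer argument for (Abel) is asserted without any mechanism. Third, (Alg$'$) does follow, but not as you describe: the paper derives it from the monomial period relation of Lemma \ref{btog3} --- that $\int_{\gamma_1}\eta_{\frac{i}{m},\frac{j}{m}}\int_{\gamma_2}\eta_{\frac{m-i}{m},\frac{m-j}{m}}/2\pi i \in \overline{\mathbb Q}$, proved by a cup-product computation in \v{C}ech cohomology reducing to $\mathbb P^1$ --- combined with the elementary gamma-to-beta reduction of Lemma \ref{sisprofb} and (\ref{pisb}), with no Shintani cone decomposition anywhere (for $F=\mathbb Q$ the Hurwitz zeta formulas (\ref{hl}) suffice).
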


We note that {\rm(Asmp)} implies $\zeta_S(0,\sigma)=0$ for all $\sigma \in G$.
We consider the case when $v$ is a real place in this paper.
Automatically we may assume that $F$ is totally real since it is known that Conjecture 1 holds true if $S$ contains two places splitting completely in $K/F$.
Namely, we assume that 
\begin{center}
$F$ is a totally real field and $v,w$ are real places.
\end{center}
Then we can write the Stark unit explicitly as $\varepsilon=\exp(-2\zeta_S'(0,\sigma))$ (assuming it exists).
Here $e_K=2$ since $K$ is not totally complex and we regard $K$ as a subfield of $\mathbb R$ by the place $w$.
From now on, we focus on two conditions (Alg),(Rec) of Stark's conjecture when $v$ is a real place.
These are equivalent to the following (Alg$'$),(Rec$'$).

\begin{cnj}[A part of Stark's conjecture] \label{wsc}
Let $F$ be totally real, $K$ its finite abelian extension, and $G:=\mathrm{Gal}(K/F)$. 
Assume that $K\neq \mathbb Q$ and that 
\begin{center}
there exists an embedding $K \hookrightarrow \mathbb R$.
\end{center}
We regard $K$ as a subfield of $\mathbb R$ by this embedding. Then we have 
\begin{itemize}
\item[{\rm(Alg$'$)}] $u_F(\sigma):=\exp(-2\zeta'(0,\sigma)) \in \overline{\mathbb Q}^\times$ for $\sigma \in G$.
\item[{\rm(Rec$'$)}] $\tau(u_F(\sigma))=u_F(\tau\sigma)$ for $\sigma\in G$, $\tau \in G_F:=\mathrm{Gal}(\overline{\mathbb Q}/F)$.
\end{itemize}
Here taking the minimal $S_0:=\{$ infinite places $\} \cup \{$ ramifying places $\}$, 
we drop the symbol $S_0$ from partial zeta functions $\zeta(s,\sigma):=\zeta_{S_0}(s,\sigma)$.  
\end{cnj}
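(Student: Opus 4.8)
The plan is to deduce both (Alg$'$) and (Rec$'$) from a reciprocity law on the period ring-valued beta function, rather than from the classical theory of cyclotomic units. The first step is a Kronecker-limit-type reduction expressing the Stark quantity $u_F(\sigma)=\exp(-2\zeta'(0,\sigma))$ as the exponential of a finite $\mathbb{Z}$-linear combination of logarithms of gamma values at rational arguments. When $F=\mathbb{Q}$ this is precisely Lerch's formula $\zeta'(0,x)=\log\Gamma(x)-\tfrac12\log(2\pi)$ applied to the decomposition of $\zeta(s,\sigma)$ into Hurwitz zeta functions $N^{-s}\zeta(s,a/N)$ over a ray class modulo the conductor $N$; for a general totally real $F$ the analogous input is Shintani's formula, which writes $\zeta'(0,\sigma)$ through derivatives of Barnes multiple zeta functions attached to a Shintani cone decomposition, i.e.\ through multiple gamma values. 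In either case the output is a product of gamma values, which is the natural home for a period interpretation.

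The second step realizes this gamma-product as a special value of the period ring-valued beta function and invokes its reciprocity law. For the Fermat curve $x^{n}+y^{n}=1$ the complex periods of the relevant differentials are classical beta values $B(a/n,b/n)=\Gamma(a/n)\Gamma(b/n)/\Gamma((a+b)/n)$, computed by Rohrlich, while Coleman's analysis expresses the corresponding $p$-adic periods through Morita's $p$-adic gamma function $\Gamma_{p}$, in the spirit of Gross--Koblitz. Assembling the archimedean datum of Rohrlich with the $p$-adic data of Coleman into a single element of a period ring yields the beta function whose reciprocity law I would take as the main theorem. I would then: (i) use the reflection and Gauss multiplication formulas to rewrite the gamma-product of step one as a product of such beta values, so that $u_F(\sigma)$ becomes a value of the period ring-valued beta function; (ii) deduce (Alg$'$) from the fact that the comparison isomorphism packaged in the period ring forces the relevant ratio of periods into $\overline{\mathbb{Q}}^{\times}$; and (iii) deduce (Rec$'$) by transporting the Galois action through the reciprocity law, the point being that $\tau\in G_F$ acts on the beta periods by multiplication of the residue data $(a,b)$ by the cyclotomic character of $\tau$, which I would match with the translation $\sigma\mapsto\tau\sigma$ of the Artin-symbol index labelling the partial zeta function. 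Incorporating the $p$-adic Coleman periods alongside the archimedean ones is precisely what upgrades this from the classical reciprocity on cyclotomic units to a reciprocity law at the level of periods.

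The hard part will be step one in the general totally real case, namely passing from Shintani's multiple gamma expression to a genuine beta-value expression. Over $\mathbb{Q}$ the combinatorics collapse to single gamma values and the reflection formula produces cyclotomic units whose $G_{\mathbb{Q}}$-equivariance is elementary, so the reduction is unconditional and recovers the known case as an alternative proof. For fields of higher degree the Barnes multiple gamma values are not a priori products of Fermat-curve periods, so the beta reciprocity law does not directly apply; bridging this gap is essentially the content of the Yoshida-type conjectures expressing CM-periods, and hence Shintani invariants, through products of $\Gamma$-values. I therefore expect the method to establish (Alg$'$) and (Rec$'$) unconditionally only when $u_F(\sigma)$ already reduces to a Fermat-curve period, and otherwise to furnish a conditional reduction of the general totally real statement to the reciprocity law on the period ring-valued beta function, which is the precise sense in which the result is partial.
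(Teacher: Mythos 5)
Your overall architecture does match the paper's: a Hurwitz--Lerch reduction of $u_{\mathbb Q}(\sigma)$ to gamma values, Rohrlich's complex periods and Coleman's Frobenius action assembled into a period-ring-valued beta function with a reciprocity law, and an honest acknowledgement that general totally real $F$ is out of reach (the paper proves only Corollaries for $F=\mathbb Q$, and points to Yoshida-type conjectures exactly as you do). But two of your steps conceal the actual content, and one of them as stated would fail. In step (i) you propose to use ``the reflection and Gauss multiplication formulas'' to turn the gamma-product into beta values. The reflection formula is precisely what must be avoided: it converts $\Gamma(\tfrac{a}{m})\Gamma(\tfrac{m-a}{m})$ into $\pi/\sin(\tfrac{a}{m}\pi)$, i.e.\ it collapses the argument back into the classical cyclotomic-unit proof, and it does not produce beta values at all. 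The paper's Lemma \ref{sisprofb} instead telescopes the identity $(1-2\alpha)B(\alpha,\alpha)B(1-\alpha,1-\alpha)=\gamma(\alpha)^2/\gamma(2\alpha)$, with $\gamma(\alpha):=\Gamma(\alpha)\Gamma(1-\alpha)$, over the orbit of multiplication by $2$ modulo $m_0$, using only $\Gamma(z+1)=z\Gamma(z)$; note that this device outputs beta values in \emph{complementary pairs} $B(\alpha,\alpha)B(1-\alpha,1-\alpha)$, which is not incidental but is exactly what the next step consumes.

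The second, more serious gap is your step (ii): ``the comparison isomorphism packaged in the period ring forces the relevant ratio of periods into $\overline{\mathbb Q}^\times$'' is not an argument, and for a single beta value it is false --- $B(\tfrac{i}{m},\tfrac{j}{m})$ is a transcendental period, and Rohrlich only gives $\int_\gamma \eta_{\frac{i}{m},\frac{j}{m}}/B(\tfrac{i}{m},\tfrac{j}{m})\in\mathbb Q(\zeta_m)$. The mechanism the paper actually uses (Lemma \ref{btog3}) is that the complementary differentials satisfy $\eta_{\frac{i}{m},\frac{j}{m}}\cup\eta_{\frac{m-i}{m},\frac{m-j}{m}}\neq 0$ in the one-dimensional $\mathrm H^2_{\mathrm{dR}}$, identified with the Lefschetz motive through $F_m\to\mathbb P^1$ (the nonvanishing is checked by a \v{C}ech computation via $\mathrm{Res}_{x=1}\tfrac{dx}{1-x}\neq 0$). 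This yields $B(\tfrac{i}{m},\tfrac{j}{m})B(\tfrac{m-i}{m},\tfrac{m-j}{m})/2\pi i\in\overline{\mathbb Q}$, whence (Alg$'$), and simultaneously produces the $p$-adic constant $(2\pi i)_p$ with $\Phi_{\mathrm{cris}}((2\pi i)_p)=p(2\pi i)_p$, which is what absorbs the factor $p^{\deg\tau}$ emerging from the reciprocity law once the $p$-adic beta factors cancel in complementary pairs by the degenerate reflection formula $B_p\langle\tfrac{i}{m},\tfrac{j}{m}\rangle B_p\langle\tfrac{m-i}{m},\tfrac{m-j}{m}\rangle=\pm 1$ (Lemma \ref{btog2}). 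Without this pairing step neither conclusion follows from the reciprocity law. Two further corrections: Morita's $\Gamma_p$ does not suffice when $p\mid m$ (bad reduction) --- the paper must construct an extension of $\Gamma_p$ to $\mathbb Q_p-\mathbb Z_p$, unique only up to $\mu_\infty$ --- and as a consequence the method proves (Rec$'$) only modulo roots of unity even over $\mathbb Q$, whereas you claim it on the nose; the partiality is not only the restriction to $F=\mathbb Q$. Finally, the Galois equivariance is obtained for Weil group elements $\tau\in W_p$ with $\deg\tau\geq 0$ and then extended to all of $\mathrm{Gal}(\overline{\mathbb Q}/\mathbb Q)$ by varying $p$ and the embedding $\overline{\mathbb Q}\hookrightarrow\overline{\mathbb Q_p}$, a reduction your appeal to the cyclotomic character leaves implicit.
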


In the case of $F=\mathbb Q$, Conjecture \ref{wsc} (and Conjecture 1 also) follow from Euler's formulas and properties of cyclotomic units. 
We provide a sketch of the proof: When $F=\mathbb Q$, the case of $K=\mathbb Q(\zeta_m)^+:=\mathbb Q(\zeta_m+\zeta_m^{-1})$ 
with $\zeta_m:=\exp(\frac{2\pi\sqrt{-1}}{m})$, $3\leq m \in \mathbb N$ is essential. 
Any element in $G=\mathrm{Gal}(\mathbb Q(\zeta_m)^+/\mathbb Q)$ can be expressed as  
$[\sigma_{\pm\frac{a}{m}} \colon \zeta_m+\zeta_m^{-1} \mapsto \zeta_m^a+\zeta_m^{-a}]$ with $0<a<\frac{m}{2}$, $(a,m)=1$.
Then we have
\begin{equation} \label{suoverq}
u_{\mathbb Q}(\sigma_{\pm\frac{a}{m}})=\left(\frac{2\pi}{\Gamma(\frac{a}{m})\Gamma(\frac{m-a}{m})}\right)^2.
\end{equation}
We can derive the expression (\ref{suoverq}) from the following formulas (\ref{hl}): 
Let $\zeta(s,v,z):=\sum_{n=0}^{\infty}(z+vn)^{-s}$ be the Hurwitz zeta function ($z,v>0$). Then we have
\begin{equation} \label{hl}
\begin{split}
\zeta(s,\sigma_{\pm\frac{a}{m}})&=\zeta(s,m,a)+\zeta(s,m,m-a), \\
\zeta(0,m,a)&=\frac{1}{2}-\frac{a}{m}, \\
\exp\left(\zeta'(0,m,a)\right)&=\Gamma(\tfrac{a}{m})(2\pi)^{-\frac{1}{2}}m^{-\zeta(0,m,a)}.
\end{split}
\end{equation}
Furthermore, we see that 
\begin{equation} \label{suitocu}
u_{\mathbb Q}(\sigma_{\pm\frac{a}{m}})=\left(2\sin(\tfrac{a}{m}\pi)\right)^2=2-\left(\zeta_m^a+\zeta_m^{-a}\right)
\end{equation}
by using Euler's formulas:
\begin{equation} \label{erf}
\Gamma(z)\Gamma(1-z)=\frac{\pi}{\sin (\pi z)},
\end{equation}
\begin{equation} \label{ef}
\sin(z)=\frac{\exp(z\sqrt{-1})-\exp(-z\sqrt{-1})}{2\sqrt{-1}}.
\end{equation}
Then Conjecture \ref{wsc} in the case of $F=\mathbb Q$, $K=\mathbb Q(\zeta_m)^+$ follows immediately from (\ref{suitocu}). 

For any totally real field $F$, Shintani expressed the special values $\zeta(0,\sigma)$ and the derivative values $\zeta'(0,\sigma)$
in terms of Bernoulli polynomials and Barnes' multiple gamma functions, with some correction terms.
Shintani's formulas are generalizations of formulas (\ref{hl}). (For detail, see \cite{Shin} or \cite[Theorem 3.3, Chapter II]{Yo}.)
On the other hand, we do not have any generalization of Euler's formulas (\ref{erf}),(\ref{ef}) nor cyclotomic units for general totally real fields.
Therefore the author believes that it is worthwhile to provide an alternative proof for Conjecture \ref{wsc} even in the known case $F=\mathbb Q$.
In fact, we obtain a weaker result (Corollaries \ref{crl1},\ref{crl2} in \S \ref{defofbeta}) 
\begin{equation} \label{application}
\begin{split}
&u_\mathbb Q(\sigma) \in \overline{\mathbb Q} \qquad (\sigma \in \mathrm{Gal}(\mathbb Q(\zeta_m)^+/\mathbb Q)), \\
&\tau (u_\mathbb Q(\sigma)) \equiv u_\mathbb Q(\tau \sigma) \bmod \mu_\infty \qquad (\sigma \in \mathrm{Gal}(\mathbb Q(\zeta_m)^+/\mathbb Q), 
\tau \in \mathrm{Gal}(\overline{\mathbb Q}/\mathbb Q))
\end{split}
\end{equation}
without Euler's formulas (\ref{erf}),(\ref{ef}) nor cyclotomic units. Here we denote by $\mu_\infty$ the group of roots of unity.

The outline of this paper is structured as follows.
We find that we can write a Stark unit over $\mathbb Q$ in terms of a product of special values of the beta function in \S \ref{gitmb}.
Therefore we can reduce the problem to an algebraicity property and a reciprocity law on such products.
In \S \ref{Roh}, we introduce a result of Rohrlich. It relates special values of the beta function to periods of Fermat curves.
Besides, we need some $p$-adic argument: 
We define a modified $p$-adic gamma function on $\mathbb Q_p$ in \S \ref{pg}.
Then we introduce a result of Coleman which expresses the absolute Frobenius action on the Fermat curves in \S \ref{col}.
In particular, we rewrite it in terms of our $p$-adic gamma function.
We note that when the  Fermat curve has a bad reduction, the expression in Theorem \ref{mt1}-\rm{(ii)} becomes simpler 
than Coleman's original formula \cite[Theorem 3.13]{Co}, although the root of unity ambiguity occurs.
In \S \ref{pp}, we define $p$-adic periods of Fermat curves and study their basic properties.
We will state the main results in \S \ref{defofbeta}: We define a ``period ring-valued beta function'' 
and provide a reciprocity law on its special values (Theorem \ref{main3}).
Moreover we derive (\ref{application}) from this reciprocity law.

\begin{rmk}
We will provide not only an alternative proof but also a refinement. 
By (\ref{suitocu}), Stark's conjecture in the case of $F=\mathbb Q$, $v = \infty$ implies the following reciprocity law on special values of the sine function:
\begin{equation} \label{rlofs}
\tau\left(\sin(\tfrac{a}{m}\pi)\right)=
\pm \sin\left(\tau(\tfrac{a}{m})\pi\right) \qquad (\tau \in \mathrm{Gal}(\overline{\mathbb Q}/\mathbb Q)).
\end{equation}
Here we define the action of $\mathrm{Gal}(\overline{\mathbb Q}/\mathbb Q)$ on $\mathbb Q \cap (0,1]$ by identifying 
the set of roots of unity $\mu_\infty$ and $\mathbb Q \cap (0,1]$, $\zeta_m^a \leftrightarrow \frac{a}{m}$.
Our main result (Theorem \ref{main3}) is a refinement of this reciprocity law (\ref{rlofs}), from the sine function to ``the period ring-valued beta function''. 
\end{rmk}

\begin{rmk}
The proof of our main results is based on some formulas by Rohrlich and Coleman.
Hiroyuki Yoshida formulated a conjecture \cite[Conjecture 3.9, Chapter III]{Yo} which is a generalization of Rohrlich's formula, 
from the rational number field to general totally real fields.
Yoshida and the author also conjectured its $p$-adic analogue in \cite{KY1},\cite{KY2}, which is a generalization of Coleman's formula.
These conjectures are one of the motivation of providing an alternative proof of Stark's conjecture in the case of $F=\mathbb Q$
by using Rohrlich's formula and Coleman's formula.
\end{rmk}

\section{The gamma function in terms of the beta function} \label{gitmb}

The beta function $B(\alpha,\beta)$ is defined by
\begin{equation*}
B(\alpha,\beta):=\int_0^1 t^{\alpha-1}(1-t)^{\beta-1} dt
\end{equation*}
for $0<\alpha,\beta \in \mathbb R$ and can be written in terms of the gamma function:
\begin{equation*}
B(\alpha,\beta)=\frac{\Gamma(\alpha)\Gamma(\beta)}{\Gamma(\alpha+\beta)}.
\end{equation*}
Conversely, we can write special values $\Gamma(\alpha)$ at rational numbers $\alpha \in \mathbb Q$ 
in terms of those of the beta function by virtue of the functional equation $\Gamma(z+1)=z\Gamma(z)$. 
For example, to obtain $\Gamma(\frac{1}{3})$, we compute the product
\begin{equation*}
B(\tfrac{1}{3},\tfrac{1}{3})^2B(\tfrac{2}{3},\tfrac{2}{3})
=\frac{\Gamma(\tfrac{1}{3})^2\Gamma(\tfrac{1}{3})^2}{\Gamma(\tfrac{2}{3})^2}\frac{\Gamma(\tfrac{2}{3})\Gamma(\tfrac{2}{3})}{\Gamma(\tfrac{4}{3})}
=3\Gamma(\tfrac{1}{3})^3.
\end{equation*}
For later use, we give an explicit formula of the product $\Gamma(\alpha)\Gamma(1-\alpha)$ for $\alpha \in \mathbb Q \cap (0,1)$ 
in terms of the beta function.

\begin{lmm} \label{sisprofb}
We write $\alpha \in \mathbb Q \cap (0,1)$ as $\alpha=\frac{a}{m}$ with $a,m \in \mathbb N$, $(a,m)=1$, $0<a<m$.
We take $t,m_0$ so that $m=2^tm_0$, $(2,m_0)=1$. 
When $\frac{a}{m_0} \neq 1$, we take the smallest $f_{m_0} \in \mathbb N$ satisfying $2^{f_{m_0}} \equiv 1 \bmod m_0$. Then we have 
\begin{equation*} 
\begin{split}
(\Gamma(\tfrac{a}{m})\Gamma(\tfrac{m-a}{m}))^{2^t(2^{f_{m_0}}-1)} 
=&\pm \prod_{k=1}^t\left(\tfrac{2^{k-1}m_0-a}{2^{k-1}m_0}
B(\tfrac{a}{2^km_0},\tfrac{a}{2^km_0})B(\tfrac{2^km_0-a}{2^km_0},\tfrac{2^km_0-a}{2^km_0})\right)^{2^{k-1}(2^{f_{m_0}}-1)} \\
&\times \prod_{l=0}^{f_{m_0}-1}\left(\tfrac{m_0-2^{l+1}a}{m_0}B(\tfrac{2^la}{m_0},\tfrac{2^la}{m_0})B(\tfrac{m_0-2^la}{m_0},\tfrac{m_0-2^la}{m_0})\right)^{2^{f_{m_0}-1-l}}.
\end{split}
\end{equation*}
When $\frac{a}{m_0} =1$ (i.e., $\alpha=\frac{a}{m}=\frac{1}{2^t}$), we have
\begin{equation*}
(\Gamma(\tfrac{1}{2^t})\Gamma(\tfrac{2^t-1}{2^t}))^{2^{t-1}}=B(\tfrac{1}{2},\tfrac{1}{2})\prod_{k=2}^t\left(\tfrac{2^{k-1}-1}{2^{k-1}}
B(\tfrac{1}{2^k},\tfrac{1}{2^k})B(\tfrac{2^k-1}{2^k},\tfrac{2^k-1}{2^k})\right)^{2^{k-2}}.
\end{equation*}
\end{lmm}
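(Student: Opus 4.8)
The plan is to prove the identity using only the two facts already available: the relation $B(\alpha,\beta)=\Gamma(\alpha)\Gamma(\beta)/\Gamma(\alpha+\beta)$, read as the \emph{definition} of $B$ by meromorphic continuation whenever an argument leaves $(0,\infty)$ (harmless here, since all gamma values occurring are finite and nonzero because $(a,m)=1$), together with the functional equation $\Gamma(z+1)=z\Gamma(z)$; in particular I would avoid the reflection formula \eqref{erf} entirely. The organizing idea is to set $P_k:=\Gamma(\tfrac{a}{2^km_0})\Gamma(\tfrac{2^km_0-a}{2^km_0})$ and $\tilde P_l:=\Gamma(\tfrac{2^la}{m_0})\Gamma(\tfrac{m_0-2^la}{m_0})$, and to show that each bracketed factor on the right-hand side, \emph{after} multiplication by its rational prefactor, collapses to a ratio of consecutive $P$'s (resp.\ $\tilde P$'s). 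The products then telescope, and the role of the prefactors $\tfrac{2^{k-1}m_0-a}{2^{k-1}m_0}$ and $\tfrac{m_0-2^{l+1}a}{m_0}$ is precisely to absorb the factor produced by one application of $\Gamma(z+1)=z\Gamma(z)$.

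Concretely, for the $2$-power factors I would compute $B(\tfrac{a}{2^km_0},\tfrac{a}{2^km_0})=\Gamma(\tfrac{a}{2^km_0})^2/\Gamma(\tfrac{a}{2^{k-1}m_0})$ directly, and for the complementary factor use $\tfrac{2^km_0-a}{2^{k-1}m_0}=1+\tfrac{2^{k-1}m_0-a}{2^{k-1}m_0}$ together with $\Gamma(z+1)=z\Gamma(z)$ to peel off exactly the prefactor $\tfrac{2^{k-1}m_0-a}{2^{k-1}m_0}$. This yields $\tfrac{2^{k-1}m_0-a}{2^{k-1}m_0}\,B(\tfrac{a}{2^km_0},\tfrac{a}{2^km_0})B(\tfrac{2^km_0-a}{2^km_0},\tfrac{2^km_0-a}{2^km_0})=P_k^2/P_{k-1}$. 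Raising to $2^{k-1}(2^{f_{m_0}}-1)$ and multiplying over $k=1,\dots,t$ makes the first product telescope to $(P_t^{2^t}/P_0)^{2^{f_{m_0}}-1}$, where $P_t=\Gamma(\tfrac{a}{m})\Gamma(\tfrac{m-a}{m})$ is the quantity we want and $P_0=\Gamma(\tfrac{a}{m_0})\Gamma(\tfrac{m_0-a}{m_0})$ is a leftover to be killed by the second product.

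The odd factors are handled identically in spirit: the same manipulation, now using $\tfrac{2(m_0-2^la)}{m_0}=1+\tfrac{m_0-2^{l+1}a}{m_0}$, gives $\tfrac{m_0-2^{l+1}a}{m_0}\,B(\tfrac{2^la}{m_0},\tfrac{2^la}{m_0})B(\tfrac{m_0-2^la}{m_0},\tfrac{m_0-2^la}{m_0})=\tilde P_l^2/\tilde P_{l+1}$, so the second product telescopes to $\tilde P_0^{2^{f_{m_0}}}/\tilde P_{f_{m_0}}$. Here $\tilde P_0=P_0$, so the leftover denominator cancels once I show $\tilde P_{f_{m_0}}=\pm P_0$. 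This is where the hypothesis $2^{f_{m_0}}\equiv1\bmod m_0$ enters: writing $2^{f_{m_0}}=1+Nm_0$ with $N=(2^{f_{m_0}}-1)/m_0\in\mathbb N$, the two arguments of $\tilde P_{f_{m_0}}$ differ from those of $\tilde P_0$ by the integers $Na$ and $-Na$, and repeated use of $\Gamma(z+1)=z\Gamma(z)$ produces two products of linear factors that coincide up to the sign $(-1)^{Na}$; hence $\tilde P_{f_{m_0}}=\pm P_0$ and the second product equals $\pm P_0^{2^{f_{m_0}}-1}$, proving the main formula up to sign.

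The main obstacle is exactly this ``wrap-around'' step $\tilde P_{f_{m_0}}=\pm P_0$: it is the only place the multiplicative order of $2$ modulo $m_0$ is used and the only source of the sign ambiguity $\pm$ in the statement, so some care with the parity of $Na$ is required. Finally I would dispose of the degenerate case $m_0=1$ (equivalently $\alpha=\tfrac{1}{2^t}$) separately: there the odd product is empty and $P_0=\Gamma(1)\Gamma(0)$ is infinite, so the $k=1$ factor of the first product degenerates (its prefactor $\tfrac{2^0m_0-a}{2^0m_0}$ vanishes). Instead one starts the telescoping at $k=2$, observes $B(\tfrac12,\tfrac12)=\Gamma(\tfrac12)^2=P_1$, and telescopes $\prod_{k=2}^t(\,\cdot\,)^{2^{k-2}}$ to $P_t^{2^{t-1}}/P_1$; multiplying by the separate factor $B(\tfrac12,\tfrac12)=P_1$ gives $P_t^{2^{t-1}}=(\Gamma(\tfrac1{2^t})\Gamma(\tfrac{2^t-1}{2^t}))^{2^{t-1}}$ with no sign ambiguity.
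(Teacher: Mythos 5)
Your proposal is correct and is essentially the paper's own proof: your identity $\tfrac{2^{k-1}m_0-a}{2^{k-1}m_0}\,B(\alpha,\alpha)B(1-\alpha,1-\alpha)=P_k^2/P_{k-1}$ is exactly the paper's relation $\beta(\alpha)=\gamma(\alpha)^2/\gamma(2\alpha)$ with $\gamma(\alpha)=\Gamma(\alpha)\Gamma(1-\alpha)$, and both arguments run the same two telescoping products (the $2$-power chain and the odd chain), closing the loop with the observation that $\gamma(\tfrac{a}{m_0})$ modulo $\pm 1$ depends only on $a \bmod m_0$ via $\Gamma(z+1)=z\Gamma(z)$. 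The only differences are cosmetic (order of the two chains, and your more explicit treatment of the sign $(-1)^{Na}$ and of the degenerate case $m_0=1$, which the paper dispatches as ``a similar but simpler argument'').
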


\begin{proof}
We put $\gamma(\alpha):=\Gamma(\alpha)\Gamma(1-\alpha)$, $\beta(\alpha):=(1-2\alpha)B(\alpha,\alpha)B(1-\alpha,1-\alpha)$.
Then we have $\beta(\alpha)=\frac{\gamma(\alpha)^2}{\gamma(2\alpha)}$ if $\alpha \neq \frac{1}{2}$.
First we assume that $2 \nmid m$. We take $f_m \in \mathbb N$ so that $2^{f_m} \equiv 1 \bmod m$. Then we obtain
\begin{equation*}
\prod_{l=0}^{f_m-1}\beta(\tfrac{2^la}{m})^{2^{f_m-1-l}} 
= \frac{\gamma(\frac{a}{m})^{2^{f_m}}}{\gamma(\frac{2a}{m})^{2^{f_m-1}}}\frac{\gamma(\frac{2a}{m})^{2^{f_m-1}}}{\gamma(\frac{4a}{m})^{2^{f_m-2}}}\dots
\frac{\gamma(\frac{2^{f_m-1}a}{m})^2}{\gamma(\frac{2^{f_m}a}{m})} =\pm \gamma\left(\tfrac{a}{m}\right)^{2^{f_m}-1},
\end{equation*}
since $\gamma(\tfrac{a}{m}) \bmod \{\pm 1\}$ depends only on $a \bmod m$ by $\Gamma(z+1)=z\Gamma(z)$.
Next, assume that $2 \nmid a$ and write $m=2^tm_0$ with $(2,m_0)=1$.
Then we have 
\begin{equation*}
\gamma(\tfrac{a}{m_0})\prod_{k=1}^t\beta(\tfrac{a}{2^km_0})^{2^{k-1}} 
=\gamma(\tfrac{a}{m_0})\frac{\gamma(\frac{a}{2m_0})^2}{\gamma(\frac{a}{m_0})}\frac{\gamma(\frac{a}{4m_0})^4}{\gamma(\frac{a}{2m_0})^2}\dots
\frac{\gamma(\frac{a}{2^tm_0})^{2^t}}{\gamma(\frac{a}{2^{t-1}m_0})^{2^{t-1}}}
=\gamma(\tfrac{a}{m})^{2^t}
\end{equation*}
if $\frac{a}{m_0}\neq 1$.
Combining these, we obtain the first formula.
The case of $\frac{a}{m_0}=1$ follows from a similar but simpler argument.
\end{proof}

\section{Periods of Fermat curves} \label{Roh}

Let $F_m$ be the $m$th Fermat curve defined by the affine equation $x^m+y^m=1$ ($3\leq m \in \mathbb N$). 
We consider differentials of the second kind $\eta_{\frac{i}{m},\frac{j}{m}}:=x^{i-1}y^{j-m}\mathit{d}x$ ($0<i,j<m$, $i+j\neq m$).
The following fact is well-known (\cite[Theorem in Appendix by Rohrlich]{Gr}). 
For all $\gamma \in H_1(F_m(\mathbb C),\mathbb Q)$, we have
\begin{equation} \label{pisb}
\frac{\int_{\gamma} \eta_{\frac{i}{m},\frac{j}{m}}}{B(\frac{i}{m},\frac{j}{m})} \in \mathbb Q(\zeta_m).
\end{equation}
Moreover we can take $\gamma_0 \in H_1(F_m(\mathbb C),\mathbb Q)$ ($\gamma_0=m\gamma_m$ with $\gamma_m$ in \cite[Proposition 4.9]{Ot}) so that
\begin{equation} \label{pisb2}
\int_{\gamma_0} \eta_{\frac{i}{m},\frac{j}{m}}=B(\tfrac{i}{m},\tfrac{j}{m}).
\end{equation}

\section{$p$-adic gamma functions} \label{pg}

We prepare $p$-adic analogues of the gamma function.
Morita \cite{Mo} constructed the $p$-adic gamma function $\Gamma_p \colon \mathbb Z_p \rightarrow \mathbb Z_p^\times$,
which is continuous and characterized by $\Gamma_p(n)=(-1)^n\prod_{k=1,\ (p,k)=1}^{n-1} k$ for $n \in \mathbb N$.
We note that the $p$-adic counterpart  of the formula (\ref{erf}) is ``degenerate'':
\begin{equation} \label{perf}
\Gamma_p(z)\Gamma_p(1-z)=\pm 1.
\end{equation}
For the proof, see \cite[Lemma 2.3]{GK}.

We define $\Gamma_p$ on $\mathbb Q_p-\mathbb Z_p$ as follows. For simplicity, assume that $p$ is odd.
We denote Iwasawa's $p$-adic $\log$ function by $\log_p$.
We define the $p$-adic exponential function $\exp_p(z)$ on $p\mathbb Z_p$ by the usual power series $\sum_{n=0}^\infty \frac{z^n}{n!}$.
In order to define $\exp_p(z)$ for $z \in \mathbb Q_p$, we choose values $\exp_p(\frac{1}{p^e}) \in \mathcal O_{\overline{\mathbb Q_p}}^\times
=\{z \in \overline{\mathbb Q_p} \mid |z|_p=1\}$ for $0\leq e \in \mathbb Z$ so that 
\begin{equation} \label{rofep}
\left(\exp_p(\tfrac{1}{p^{e+1}})\right)^p=\exp_p(\tfrac{1}{p^e}).
\end{equation}
For any $z \in \mathbb Q_p$, we can write $z=\tfrac{n}{p^e}+z_0$ with $n,e \in \mathbb Z$, $0\leq e$, $0\leq n < p^{e+1}$, $z_0 \in p\mathbb Z_p$.
Then we define
\begin{equation*}
\exp_p(\tfrac{n}{p^e}+z_0):=\exp_p(\tfrac{1}{p^e})^n \exp_p(z_0).
\end{equation*}
We write $z'=\frac{n'}{p^{e'}}+z_0'$ similarly for $z' \in \mathbb Q_p$ with $|z-z'|_p\leq |p|_p$. Then we have $\frac{n}{p^e}=\frac{n'}{p^{e'}}$.
Therefore $\exp_p(z)$ is well-defined and continuous on $z \in \mathbb Q_p$.
Moreover it satisfies
\begin{equation} \label{propofep}
\begin{split}
\exp_p(z_1+z_2)&=\exp_p(z_1)\exp_p(z_2) \qquad (z_1,z_2 \in \mathbb Q_p), \\
\exp_p(\log_p(z))&=z^* \qquad (z \in \mathbb Q_p^\times).
\end{split}
\end{equation}
Here $z^*=z\omega(zp^{-\mathrm{ord}_p\,z})^{-1}p^{-\mathrm{ord}_p\,z}$ is defined by the usual decomposition:
\begin{equation*}
\begin{array}{ccccccc}
\mathbb Q_p^\times & = & \mu_{p-1} & \times & p^{\mathbb Z} & \times & (1+p\mathbb Z_p), \\
z & = & \omega(zp^{-\mathrm{ord}_pz}) & \times & p^{\mathrm{ord}_p\,z} & \times & z^* 
\end{array}
\end{equation*}
with $\mu_{p-1}$ the set of $(p-1)$st roots of unity, $\omega$ the Teichm\"uller character.
We shall give a proof of (\ref{propofep}): 
Write $z_i=\frac{n_i}{p^{e_i}}+z_{i,0}$ ($i=1,2$) in a manner similar to the above.
We may assume that $e_1\leq e_2$.
Then we have
\begin{equation*}
\exp_p(z_1+z_2)=
\begin{cases}
\left(\exp_p(\tfrac{1}{p^{e_2}})\right)^{n_1p^{e_2-e_1}+n_2} \exp_p(z_{1,0}+z_{2,0}) & (\frac{n_1}{p^{e_1}}+\frac{n_2}{p^{e_2}}<p) \\
\left(\exp_p(\tfrac{1}{p^{e_2}})\right)^{n_1p^{e_2-e_1}+n_2-p^{e_2+1}} \exp_p(p+z_{1,0}+z_{2,0}) & (\frac{n_1}{p^{e_1}}+\frac{n_2}{p^{e_2}}\geq p)
\end{cases}
\end{equation*}
by definition.
Since $\exp_p$ on $p \mathbb Z_p$ is a group homomorphism by a property of the power series $\sum_{n=0}^\infty \frac{z^n}{n!}$, 
the first equality of (\ref{propofep}) follows from (\ref{rofep}).
The latter equality follows from the fact that $\log_pz=\log_pz^*$ and $\exp_p\circ \log_p$ is the identity map on $1+p\mathbb Z_p$.

\begin{rmk}
``The root of unity ambiguity'' occurs when we choose $\exp_p(\frac{1}{p^e})$. 
\end{rmk}

\begin{lmm} \label{pgamma}
There exists a continuous function $\Gamma_p\colon \mathbb Q_p-\mathbb Z_p \rightarrow \mathcal O_{\overline{\mathbb Q_p}}^\times$
satisfying
\begin{equation} \label{feqofpg}
\Gamma_p(z+1)= z^* \Gamma_p(z), \qquad
\Gamma_p(2z)=2^{2z-\frac{1}{2}} \Gamma_p(z)\Gamma_p(z+\tfrac{1}{2}) \qquad (z \in \mathbb Q_p-\mathbb Z_p).
\end{equation}
Here we put $2^{2z-\frac{1}{2}}:=\exp_p((2z-\frac{1}{2})\log_p2)$.
Moreover, such a function $\Gamma_p $ is unique up to multiplication by a root of unity.
Strictly speaking, if continuous functions $\Gamma_p ,\Gamma_p'$ satisfy (\ref{feqofpg}), 
then for all $z \in \mathbb Q_p-\mathbb Z_p$ we have $\Gamma_p(z)\equiv \Gamma_p'(z) \bmod \mu_\infty$.
\end{lmm}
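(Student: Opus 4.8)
The plan is to build $\Gamma_p$ as $\exp_p\circ L$ for a suitable $p$-adic analogue $L$ of $\log\Gamma$, so that the two functional equations in (\ref{feqofpg}) are converted into additive identities for $L$ and then recovered formally from the properties of $\exp_p$ recorded in (\ref{propofep}). Concretely, I would look for a continuous function $L\colon \mathbb Q_p-\mathbb Z_p \to \mathbb Q_p$ satisfying
\begin{equation*}
L(z+1)-L(z)=\log_p z, \qquad L(2z)=(2z-\tfrac12)\log_p 2 + L(z)+L(z+\tfrac12).
\end{equation*}
Granting such an $L$, set $\Gamma_p:=\exp_p\circ L$. Since $\exp_p$ is valued in $\mathcal O_{\overline{\mathbb Q_p}}^\times$ and is a homomorphism with $\exp_p(\log_p z)=z^*$ by (\ref{propofep}), applying $\exp_p$ to the two displayed relations yields exactly $\Gamma_p(z+1)=z^*\Gamma_p(z)$ and $\Gamma_p(2z)=2^{2z-\frac12}\Gamma_p(z)\Gamma_p(z+\frac12)$ with $2^{2z-\frac12}=\exp_p((2z-\frac12)\log_p2)$ as in the statement; continuity of $\Gamma_p$ follows from that of $L$ and $\exp_p$. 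Thus existence is reduced to producing $L$.

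To construct $L$, I would take the $p$-adic log-gamma function realized as the Volkenborn integral $L(z)=\int_{\mathbb Z_p}(z+t)(\log_p(z+t)-1)\,dt$. For $z\in\mathbb Q_p-\mathbb Z_p$ one has $|z+t|_p=|z|_p>1$ uniformly for $t\in\mathbb Z_p$, so $t\mapsto\log_p(z+t)$ is analytic, the integral converges, and $L$ is $\mathbb Q_p$-valued and continuous (indeed analytic) in $z$; this is precisely why the natural domain is $\mathbb Q_p-\mathbb Z_p$ rather than all of $\mathbb Q_p$. The translation relation $L(z+1)-L(z)=\log_p z$ is the standard increment property of the Volkenborn integral (the increment being the derivative of the integrand at $t=0$), and, since $p$ is odd so that $(2,p)=1$, the duplication relation is exactly the $n=2$ case of the Gauss--Legendre multiplication formula $\sum_{j=0}^{n-1}L(z+\tfrac{j}{n})=L(nz)-(nz-\tfrac12)\log_p n$. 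Establishing these two properties with the stated constants --- equivalently, reconstructing the relevant part of Diamond's theory on the domain $\mathbb Q_p-\mathbb Z_p$ --- is where the real work lies, and I expect it to be the main obstacle; feeding the result through $\exp_p$ is then purely formal.

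For uniqueness, suppose $\Gamma_p,\Gamma_p'$ are two continuous solutions of (\ref{feqofpg}) and put $h:=\Gamma_p/\Gamma_p'$, a continuous function into $\mathcal O_{\overline{\mathbb Q_p}}^\times$. The factor $z^*$ cancels in the translation relation and the factor $2^{2z-\frac12}$ cancels in the duplication relation, leaving
\begin{equation*}
h(z+1)=h(z), \qquad h(2z)=h(z)\,h(z+\tfrac12) \qquad (z\in\mathbb Q_p-\mathbb Z_p).
\end{equation*}
From $h(z+1)=h(z)$ we get $h(z+n)=h(z)$ for all $n\in\mathbb Z$; since $\mathbb Z$ is dense in $\mathbb Z_p$ and $h$ is continuous, $h(z+t)=h(z)$ for all $t\in\mathbb Z_p$, so $h$ factors through $(\mathbb Q_p/\mathbb Z_p)\setminus\{0\}$. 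Because $p$ is odd, $\tfrac12\in\mathbb Z_p$, whence $z+\tfrac12\equiv z\pmod{\mathbb Z_p}$ and the duplication relation collapses to $\overline h(2x)=\overline h(x)^2$ on the Pr\"ufer group $\mathbb Q_p/\mathbb Z_p$.

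Finally, fix $x\in(\mathbb Q_p/\mathbb Z_p)\setminus\{0\}$ of order $p^e$ and let $d$ be the multiplicative order of $2$ modulo $p^e$, so that $2^d x=x$. Iterating $\overline h(2x)=\overline h(x)^2$ gives $\overline h(x)=\overline h(2^d x)=\overline h(x)^{2^d}$, hence $\overline h(x)^{2^d-1}=1$ and $\overline h(x)\in\mu_\infty$. Therefore $\Gamma_p(z)\equiv\Gamma_p'(z)\bmod\mu_\infty$ for every $z$, which is the asserted uniqueness. The only genuinely delicate point in the whole argument is the existence half --- more precisely, the construction of $L$ with the correct $\mathbb Q_p$-valuedness and continuity on $\mathbb Q_p-\mathbb Z_p$ together with the precise normalization in the duplication relation; the uniqueness half is then formal once one observes that period $1$ plus continuity forces descent to the discrete quotient $\mathbb Q_p/\mathbb Z_p$, on which the duplication law becomes squaring and the torsion of the Pr\"ufer group does the rest.
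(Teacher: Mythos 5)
Your proposal is correct and takes essentially the same route as the paper: the paper's $L\Gamma_{p,1}(z_0,(p^e))$, defined via the Volkenborn-type limit $J_X$, is exactly Diamond's log-gamma $G_p(z_0/p^e)$ (since $\log_p(p^e)=0$), the paper merely proving the translation and duplication identities by Bernoulli-polynomial manipulations instead of citing Diamond, and existence is likewise obtained by composing with $\exp_p$. Your uniqueness argument is also the paper's, up to working multiplicatively with $h=\Gamma_p/\Gamma_p'$ rather than additively with the difference of the $\log_p$'s; both hinge on continuity upgrading $\mathbb Z$-periodicity to $\mathbb Z_p$-periodicity, on $\tfrac12\in\mathbb Z_p$, and on $2^f\equiv 1\bmod p^e$.
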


\begin{proof}
In \cite{Ka1}, the $p$-adic logarithmic gamma function $L\Gamma_{p,1}(a,(a_1))$ ($a\in \mathbb Q_p$, $a_1 \in \mathbb Q_p^\times$) 
is defined as the derivative value at $s=0$ of the $p$-adic Hurwitz zeta function $\zeta_{p,1}(s,(a_1),a)$ (\cite[(5.10)]{Ka1}).
We omit the precise definition but we write $L\Gamma_{p,1}(a,(a_1))$ in the form of \cite[(5.12)]{Ka1}.
We introduce some notations and definitions:
For a function $f(X)\colon \mathbb Z_p \rightarrow \mathbb Q_p$, we define
\begin{equation*}
J_X(f(X)):=\lim_{l\rightarrow \infty} \frac{1}{p^l}\sum_{n=0}^{p^l-1}f(n)
\end{equation*}
if it converges in $\mathbb Q_p$. We easily see that 
\begin{equation*}
J_X((aX+b)^n)=a^nB_n(\tfrac{b}{a}) \qquad (0\leq n \in \mathbb Z, a \in \mathbb Q_p^\times, b \in \mathbb Q_p),
\end{equation*}
where $B_n(x)$ it the $n$th Bernoulli polynomial (\cite[Lemma 5.1]{Ka1}). 
For $a \in \mathbb Z_p^\times$, $0\neq a_1 \in p\mathbb Z_p$, we define the function $f_{a,a_1}(X) \colon \mathbb Z_p \rightarrow \mathbb Q_p$ by 
\begin{equation*}
\begin{split}
f_{a,a_1}(X)&:=\frac{-(a_1X+a)}{a_1}(1-\log_p(a_1X+a)) \\ 
&=\frac{-1}{a_1}\left((a_1X+a)+\sum_{k=1}^\infty \frac{(-1)^k}{k}\left(\sum_{l=0}^k\frac{k!}{l!(k-l)!}\omega(a)^{-l}(a_1X+a)^{l+1}(-1)^{k-l}\right)\right).
\end{split}
\end{equation*}
We put
\begin{equation} \label{lgp}
\begin{split}
L\Gamma_{p,1}(a,(a_1))&:=J_X(f_{a,a_1}(X)) \\
&=-B_1(\tfrac{a}{a_1})-\sum_{k=1}^\infty \frac{(-1)^k}{k}\left(\sum_{l=0}^k\frac{k!}{l!(k-l)!}\omega(a)^{-l}a_1^lB_{l+1}(\tfrac{a}{a_1})(-1)^{k-l}\right).
\end{split}
\end{equation}
We see that for $c \in \mathbb Z_p^\times$
\begin{equation} \label{fflgp2}
L\Gamma_{p,1}(ca,(ca_1))=L\Gamma_{p,1}(a,(a_1))+B_1(\tfrac{a}{a_1})\log_pc
\end{equation}
since $f_{ca,ca_1}(X)=\frac{(a_1X+a)}{a_1}\log_pc+f_{a,a_1}(X)$.
The equation \cite[(5.12)]{Ka1} (in the case of $r=1$) is stated only when $|a-1|_p<1$, 
but it is also valid for all $a \in \mathbb Z_p^\times$ since we have $L\Gamma_{p,1}(a,(a_1))=L\Gamma_{p,1}(\omega(a)^{-1}a,(\omega(a)^{-1}a_1))$ 
by (\ref{fflgp2}).
So the definition (\ref{lgp}) of $L\Gamma_{p,1}$ in this paper is consistent with that in \cite{Ka1}.
The function $L\Gamma_{p,1}(a,(a_1))$ is continuous on $a \in \mathbb Z_p^\times$, $a_1 \in p\mathbb Z_p$ by \cite[Lemma 5.4-1]{Ka1}.

Now we define the $p$-adic $\Gamma$ function on $\mathbb Q_p-\mathbb Z_p$. For $z \in \mathbb Z_p^\times$, $e \in \mathbb N$, we put
\begin{equation} \label{defofgp}
\Gamma_p(\tfrac{z}{p^e}):=\exp_p(L\Gamma_{p,1}(z,(p^e))).
\end{equation}
It is continuous since so are $\exp_p$, $L\Gamma_{p,1}$.
One can derive the desired properties from the expression (\ref{lgp}) and some properties of Bernoulli polynomials.
Indeed, the well-known formulas
\begin{equation*}
B_n(x+1)=B_n(x)+nx^{n-1}, \qquad
B_n(mx)=m^{n-1}\sum_{k=0}^{m-1}B_n(x+\tfrac{k}{m})\quad (m \in \mathbb N)
\end{equation*}
imply that for $a \in \mathbb Z_p^\times$, $a_1 \in p\mathbb Z_p$, we have
\begin{equation} \label{fflgp1}
\begin{split}
&L\Gamma_{p,1}(a+a_1,(a_1))-L\Gamma_{p,1}(a,(a_1)) \\
&=-1-\sum_{k=1}^\infty \frac{(-1)^k}{k}\left(\sum_{l=0}^k\frac{k!}{l!(k-l)!} (l+1)(\omega(a)^{-1}a)^l(-1)^{k-l}\right) \\
&=-1-\sum_{k=1}^\infty \frac{(-1)^k}{k}(\omega(a)^{-1}a-1)^k-(-\omega(a)^{-1}a)\sum_{k=1}^\infty (1-\omega(a)^{-1}a)^{k-1}=\log_p(a), \\
&L\Gamma_{p,1}(ma,(a_1))=\sum_{k=0}^{m-1}L\Gamma_{p,1}(ma+ka_1,(ma_1))\quad (m\in \mathbb N,\ (p,m)=1).
\end{split}
\end{equation}
By (\ref{fflgp2}) and (\ref{fflgp1}), we have for $z \in \mathbb Z_p^\times$, $e,m\in \mathbb N$ with $(p,m)=1$
\begin{eqnarray}
L\Gamma_{p,1}(z+p^e,(p^e))&=&\log_pz+L\Gamma_{p,1}(z,(p^e)), \label{prpoflgp1} \\
L\Gamma_{p,1}(mz,(p^e))&=&\sum_{k=0}^{m-1}\left((\tfrac{z}{p^e}+\tfrac{k}{m}-\tfrac{1}{2})\log_pm+L\Gamma_{p,1}(z+\tfrac{kp^e}{m},(p^e))\right). \label{prpoflgp2} 
\end{eqnarray}
Then the assertions (\ref{feqofpg}) follow from (\ref{propofep}),(\ref{defofgp}),(\ref{prpoflgp1}) and (\ref{prpoflgp2}) in case of $m=2$.
In order to prove the uniqueness of $\Gamma_p$ up to $\mu_\infty$, it suffices to show that $\log_p$ of $\Gamma_p$ is unique,
since the kernel of $\log_p$ is generated by $\mu_\infty$ and rational powers of $p$.
(Note that we assumed that $\Gamma_p$ takes values in $\mathcal O_{\overline{\mathbb Q_p}}^\times$.)
To do this, let $f_1,f_2$ be continuous functions on $\mathbb Q_p-\mathbb Z_p$ satisfying 
$f_i(z+1)=\log_pz+f_i(z)$, $f_i(2z)=\log_p(2^{2z-\frac{1}{2}})+f_i(z)+f_i(z+\frac{1}{2})$.
Then $h(z):=f_1(z)-f_2(z)$ satisfies $h(z+1)=h(z)$, $h(2z)=h(z)+h(z+\frac{1}{2})$.
Since $h$ is continuous, $h(z+1)=h(z)$ means that $h(z+a)=h(z)$ for all $a \in \mathbb Z_p$.
Hence $h(2z)=h(z)+h(z+\frac{1}{2})=2h(z)$.
Write $z=\frac{z_0}{p^e}$ with $z_0 \in \mathbb Z_p^\times$, $e\in \mathbb N$.
Since $(2,p)=1$, there exists $f,m \in \mathbb N$ satisfying $2^f=1+mp^e$. Then we get 
$2^fh(z)=h(2^fz)=h(z+mp^ez)=h(z+mz_0)=h(z)$.
Therefore we obtain $f_1(z)-f_2(z)=h(z)=0$ as desired.
\end{proof}

\begin{dfn} \label{gp}
We fix $\Gamma_p$ on $\mathbb Q_p-\mathbb Z_p$ satisfying (\ref{feqofpg})
and define the $p$-adic gamma function $\Gamma_p$ on $\mathbb Q_p$ by gathering our $\Gamma_p$ on $\mathbb Q_p-\mathbb Z_p$ 
and Morita's $\Gamma_p$ on $\mathbb Z_p$.
We define two kinds of $p$-adic beta functions 
$B(*,*),B\langle*,*\rangle \colon \mathbb Q \times \mathbb Q \rightarrow \mathcal O_{\overline{\mathbb Q_p}}^\times$ by 
\begin{equation*}
B_p(\alpha,\beta):=\frac{\Gamma_p(\alpha)\Gamma_p(\beta)}{\Gamma_p(\alpha +\beta)}, \quad
B_p\langle\alpha,\beta\rangle:=\frac{\Gamma_p(\langle\alpha\rangle)\Gamma_p(\langle\beta\rangle)}{\Gamma_p(\langle \alpha +\beta\rangle)}.
\end{equation*}
Here we denote the fractional part $\in (0,1]$ of $\alpha \in \mathbb Q$ by $\langle \alpha \rangle$.
\end{dfn}

\begin{rmk}
The functional equations (\ref{feqofpg}) are $p$-adic analogues of classical formulas
\begin{equation} \label{feqofg}
\Gamma(z+1)= z\Gamma(z), \qquad
\Gamma(2z)= \frac{2^{2z-\frac{1}{2}}}{\sqrt{2\pi}}\Gamma(z)\Gamma(z+\tfrac{1}{2}).
\end{equation}
We note that Artin \cite{Ar1,Ar2} showed that the classical gamma function is also characterized by functional equations (\ref{feqofg}) and some conditions.
\end{rmk}

\begin{rmk}
By (\ref{prpoflgp2}), we also obtain the multiplication formula for $m \in \mathbb N$ with $(p,m)=1$. Namely, for $z \in \mathbb Q_p-\mathbb Z_p$ we have
\begin{equation*}
\Gamma_p(mz)= \prod_{k=0}^{m-1}m^{z+\frac{k}{m}-\frac{1}{2}}\Gamma_p(z+\tfrac{k}{m})
\end{equation*}
with $m^{z+\frac{k}{m}-\frac{1}{2}}:=\exp_p((z+\frac{k}{m}-\frac{1}{2})\log_pm)$.
We need some adjustments for $m$ with $p \mid m$.
In particular, one can show that for $z \in \mathbb Q_p$
\begin{equation*}
\begin{split}
\Gamma_p(pz)&=\prod_{k=0}^{p-1}\Gamma_p(z+\tfrac{k}{p}) \qquad (pz \notin \mathbb Z_p), \\
\Gamma_p(pz)&\equiv \prod_{k=0,\ pz+k\in \mathbb Z_p^\times}^{p-1}\Gamma_p(z+\tfrac{k}{p}) \bmod \mu_\infty \qquad (pz \in \mathbb Z_p).
\end{split}
\end{equation*}
Note that the latter formula gives a relation between Morita's $\Gamma_p$ on $\mathbb Z_p$ and our $\Gamma_p$ on $\mathbb Q_p-\mathbb Z_p$.
We provide a brief sketch of the proof: When $pz \notin \mathbb Z_p$, we can write $z=\frac{z_0}{p^e}$ with $z_0 \in \mathbb Z_p^\times,e \geq 2$.
Then $\Gamma_p(pz)=\exp_p(L\Gamma_{p,1}(z_0,(p^{e-1})))$, $\Gamma_p(z+\frac{k}{p})=\exp_p(L\Gamma_{p,1}(z_0+kp^{e-1},(p^e)))$. 
Therefore we only need the formula $L\Gamma_{p,1}(z_0,(p^{e-1}))=\sum_{k=0}^{p-1}L\Gamma_{p,1}(z_0+kp^{e-1},(p^e))$.
It follows form (\ref{lgp}) similarly to the proof of (\ref{prpoflgp2}).
When $pz \in \mathbb Z_p$, it follows from \cite[Lemma 5.5]{Ka1} since 
$L\Gamma_{p,1}(a,(1))$ ($a \in \mathbb Z_p$) in \cite{Ka1} was defined as $\sum_{k=0,\ pz+k\in \mathbb Z_p^\times}^{p-1}L\Gamma_{p,1}(a+k,(p))$.
\end{rmk}

Coleman \cite{Co} also generalized Morita's $\Gamma_p$ to a continuous function on $\mathbb Q_p$.
We denote it by $\Gamma_{\mathrm{col}}$ in this paper.
It is characterized by the following conditions (\cite[the sentence containing the equation (2.2)]{Co}):
\begin{equation*}
\begin{split}
\Gamma_{\mathrm{col}}(z+1)&=z^* \Gamma_{\mathrm{col}}(z) \qquad (z \in \mathbb Q_p -\mathbb Z_p), \\
\Gamma_{\mathrm{col}}(z)&=1 \qquad (z \in \mathbb Z[\tfrac{1}{p}]\cap [0,1)).
\end{split}
\end{equation*}
For later use, we compare Coleman's $\Gamma_{\mathrm{col}}$ and our $\Gamma_p$.
For $z \in \mathbb Q_p-\mathbb Z_p$, we write $z=z_p+z_0$ with $z_p \in \mathbb Z[\frac{1}{p}] \cap (0,1)$, $z_0 \in \mathbb Z_p$.
Then we have for each $\sharp=p,\mathrm{col}$
\begin{equation*}
\Gamma_\sharp(z)=\Gamma_\sharp(z_p+z_0)=\lim_{\mathbb N \ni n \rightarrow z_0}\Gamma_\sharp(z_p+n)
= \Gamma_\sharp(z_p)\lim_{\mathbb N \ni n \rightarrow z_0}\prod_{l=0}^{n-1}(z_p+l)^*.
\end{equation*}
Hence by $\Gamma_{\mathrm{col}}(z_p)=1$, we have
\begin{equation} \label{pcol}
\Gamma_{p}(z)= \Gamma_{\mathrm{col}}(z)\Gamma_{p}(z_p) \qquad (z \in \mathbb Q_p -\mathbb Z_p).
\end{equation}

\section{The Frobenius matrices of Fermat curves} \label{col}

In this section, we recall Coleman's results in \cite{Co}.
In \cite{Co}, the semi-linear action $\rho_{\mathrm{cris}}$ of the (crystalline) Weil group on the de Rham cohomology of a curve with arboreal reduction 
is introduced.
We express this action on Fermat curves in terms of the comparison of some cohomologies for later use.
Let $\mathbb Q_p^{\mathrm{ur}}$ be the maximal unramified extension of $\mathbb Q_p$ and 
$W_p \subset \mathrm{Gal}(\overline{\mathbb Q_p}/\mathbb Q_p)$ the Weil group.
Then for all $\tau \in W_p$, there exists $n \in \mathbb Z$ such that $\tau|_{\mathbb Q_p^{\mathrm{ur}}}$ is the $n$th power of the Frobenius automorphism.
We put $\deg \tau:=n$. 

Let $J_m$ be the Jacobian variety of the $m$th Fermat curve $F_m$.
Even if $F_m$ dose not have good reduction at $p$ (i.e., the case of $p \mid m$), $J_m$ has potentially good reduction since it has CM.
Hence there exist a finite extension field $K$ of $\mathbb Q_p$ and a smooth model $\mathcal J_m$ over $\mathcal O_K$ with the canonical isomorphism
\begin{equation} \label{pdr}
\mathrm{H_{dR}^1}(F_m,\mathbb Q)\otimes_\mathbb Q K \cong \mathrm{H_{cris}^1}(\mathcal J_m\times_{\mathcal O_K} \mathbb F_K,W_K)\otimes_{W_K} K.
\end{equation}
Here we denote by $\mathcal O_K,\mathbb F_K$ and $W_K$ the ring of integers, its residue field and the Witt ring over $\mathbb F_K$ respectively.
We may assume $K$ is normal over $\mathbb Q_p$.
We denote by $\Phi_\textrm{cris}$ the action of the absolute Frobenius on $\mathrm{H_{cris}^1}(\mathcal J_m\times_{\mathcal O_K} \mathbb F_K,W_K)$.
Then for $\tau \in W_p$ we may consider the action $\Phi_\textrm{cris}^{\deg \tau} \otimes \tau$ on the right hand side of (\ref{pdr}).
We denote by $\Phi_\tau$ the corresponding action on $\mathrm{H_{dR}^1}(F_m,\mathbb Q)\otimes_\mathbb Q K$.
Our $\Phi_\tau$ is equal to $\rho_{\mathrm{cris}}(\tau)$ in \cite{Co}.
Therefore it is also equal to $\Phi^*(\tau)$ in \cite{Co} on $\mathrm{H_{dR}^1}(F_m,\mathbb Q)$.
We need a few more notations in \cite{Co}:
Let $i,j,m \in \mathbb N$ with $0<i,j<m$, $i+j\neq m$.
We put $\varepsilon(\frac{i}{m},\frac{j}{m}):=\langle\frac{i}{m}\rangle+\langle\frac{j}{m}\rangle-\langle\frac{i}{m}+\frac{j}{m}\rangle$.
That is
\begin{equation*}
\langle\tfrac{i}{m}+\tfrac{j}{m}\rangle=
\begin{cases}
\frac{i}{m}+\frac{j}{m} & (i+j<m) \\
\frac{i}{m}+\frac{j}{m}-1 &  (i+j>m)
\end{cases}, \qquad
\varepsilon(\tfrac{i}{m},\tfrac{j}{m})=
\begin{cases}
0 & (i+j<m) \\
1 & (i+j>m)
\end{cases}.
\end{equation*}
The element $v_{m,r,s} \in \mathrm{H_{dR}^1}(F_m,\mathbb Q)$ in \cite{Co} is the class of 
\begin{equation} \label{differ}
m\langle\tfrac{i}{m}+\tfrac{j}{m}\rangle^{\varepsilon(\frac{i}{m},\frac{j}{m})}\eta_{\frac{i}{m},\frac{j}{m}}=
\begin{cases}
m\eta_{\frac{i}{m},\frac{j}{m}} & (i+j<m) \\
(i+j-m)\eta_{\frac{i}{m},\frac{j}{m}} & (i+j>m)
\end{cases}
\end{equation}
for $i,j$ with $r=\frac{i}{m}$, $s=\frac{j}{m}$.
Coleman calculated $\Phi^*(\tau)(v_{m,r,s})$ explicitly in \cite{Co} as follows.
We recall that we define the action of $\mathrm{Gal}(\overline{\mathbb Q_p}/\mathbb Q_p)$
on $\mathbb Q \cap (0,1]$ by identifying $\mathbb Q \cap (0,1] = \mu_\infty$, $\frac{a}{m} \leftrightarrow \zeta_m^a$.

\begin{thm}[{\cite[Theorems 1.7, 3.13]{Co}}] \label{mt1}
By abuse of notation, we write $\eta_{\frac{i}{m},\frac{j}{m}}$ for its class in $\mathrm H^1_{\mathrm{dR}}(F_m,\mathbb Q)$ ($0<i,j<m$, $i+j\neq m$).
Then for $\tau \in W_p$ we can write
\begin{equation*}
\Phi_\tau (\eta_{\frac{i}{m},\frac{j}{m}})=\gamma(\tau,\tfrac{i}{m},\tfrac{j}{m})\eta_{\tau(\frac{i}{m}),\tau(\frac{j}{m})}
\end{equation*}
with $\gamma(\tau,\frac{i}{m},\frac{j}{m}) \in \mathbb Q_p$. 
Moreover the values $\gamma(\tau,\frac{i}{m},\frac{j}{m})$ satisfy the following conditions.
\begin{enumerate}
\item Assume that $(p,m)=1$. Then we have for $\tau \in W_p$ with $\deg \tau=1$ 
\begin{equation*}
\gamma(\tau,\tfrac{i}{m},\tfrac{j}{m})=\frac{p^{1-\varepsilon(\frac{i}{m},\frac{j}{m})}}{\langle\frac{i}{m}+\frac{j}{m}\rangle^{\varepsilon(\frac{i}{m},\frac{j}{m})}}
\frac{(-1)^{\varepsilon(\tau(\frac{i}{m}),\tau(\frac{j}{m}))}\langle\tau(\frac{i}{m})+\tau(\frac{j}{m})\rangle^{\varepsilon(\tau(\frac{i}{m}),\tau(\frac{j}{m}))}}
{B_p\langle \tau(\frac{i}{m}),\tau(\frac{j}{m})\rangle}.
\end{equation*}
\item Assume that $p>2$, $p \mid m$, $(p,ij(i+j))=1$.
Then we have for $\tau \in W_p$ with $\deg \tau=1$ 
\begin{equation*}
\gamma(\tau,\tfrac{i}{m},\tfrac{j}{m}) \equiv p^{\frac{1}{2}}\frac{B_p(\frac{i}{m},\frac{j}{m})}{B_p(\tau(\frac{i}{m}),\tau(\frac{j}{m}))} \bmod \mu_{\infty}.
\end{equation*}
\end{enumerate}
\end{thm}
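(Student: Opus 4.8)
The plan is to deduce both formulas by translating Coleman's explicit computation of $\Phi^*(\tau)(v_{m,r,s})$ in \cite[Theorems 1.7, 3.13]{Co} into the notation of the present paper. Since $\Phi_\tau$ has already been identified with $\rho_{\mathrm{cris}}(\tau)=\Phi^*(\tau)$ on $\mathrm H^1_{\mathrm{dR}}(F_m,\mathbb Q)$, the remaining work is a change of basis followed by a change of gamma function.

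First I would record the basis relation (\ref{differ}): writing $r=i/m$, $s=j/m$, $\varepsilon=\varepsilon(r,s)$ and $\varepsilon'=\varepsilon(\tau r,\tau s)$, we have $v_{m,r,s}=m\langle r+s\rangle^\varepsilon\eta_{r,s}$ and $v_{m,\tau r,\tau s}=m\langle\tau r+\tau s\rangle^{\varepsilon'}\eta_{\tau r,\tau s}$. Applying $\Phi_\tau$ to $\eta_{r,s}=(m\langle r+s\rangle^\varepsilon)^{-1}v_{m,r,s}$ and substituting Coleman's formula $\Phi^*(\tau)(v_{m,r,s})=C\,v_{m,\tau r,\tau s}$, the two factors of $m$ cancel, giving $\gamma(\tau,r,s)=C\,\langle\tau r+\tau s\rangle^{\varepsilon'}\langle r+s\rangle^{-\varepsilon}$. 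Thus the basis change alone contributes the $\langle\cdot\rangle^{\varepsilon}$ factors appearing in (i), and what remains is to identify Coleman's coefficient $C$ with $p^{1-\varepsilon}(-1)^{\varepsilon'}/B_p\langle\tau r,\tau s\rangle$ in case (i) and with $p^{1/2}B_p(r,s)/B_p(\tau r,\tau s)$ in case (ii), modulo the stated ambiguities.

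The core step is therefore to rewrite $C$, which Coleman expresses through $\Gamma_{\mathrm{col}}$, in terms of our $\Gamma_p$ and hence the beta functions. For this I would invoke (\ref{pcol}), $\Gamma_p(z)=\Gamma_{\mathrm{col}}(z)\Gamma_p(z_p)$, to trade each $\Gamma_{\mathrm{col}}$-value for a $\Gamma_p$-value up to the correction $\Gamma_p(z_p)$. In case (i), where $(p,m)=1$, the arguments $\langle\tau r\rangle,\langle\tau s\rangle,\langle\tau r+\tau s\rangle$ are $p$-adic integers so the correction terms are controlled, and grouping the three resulting $\Gamma_p$-values reconstructs exactly $B_p\langle\tau r,\tau s\rangle$; the surviving power of $p$ and the sign $(-1)^{\varepsilon'}$ are read off from Coleman's normalization. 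In case (ii), where $p\mid m$ but $(p,ij(i+j))=1$, the arguments lie in $\mathbb Q_p-\mathbb Z_p$, so I would instead use the $\Gamma_p$ of Lemma \ref{pgamma} together with the definition of $B_p(\cdot,\cdot)$ (the variant without fractional-part normalization) to recover the ratio $B_p(r,s)/B_p(\tau r,\tau s)$ and the clean prefactor $p^{1/2}$.

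The main obstacle I anticipate is the precise matching of the auxiliary factors — the powers of $p$, the sign $(-1)^{\varepsilon'}$, and the $\langle\cdot\rangle^{\varepsilon}$ terms — between Coleman's normalization of $\Phi^*(\tau)(v_{m,r,s})$ and the $\eta$-normalization used here, especially in the bad-reduction case (ii). There the ambiguity in the choice of the values $\exp_p(1/p^e)$ (noted in the Remark after (\ref{propofep})) together with the non-uniqueness of $\Gamma_p$ up to $\mu_\infty$ (Lemma \ref{pgamma}) force the statement to be a congruence modulo $\mu_\infty$; tracing exactly which roots of unity are absorbed, so that the factor $p^{1/2}$ survives cleanly and Coleman's more complicated original expression collapses to the stated ratio of beta functions, is the delicate part of the argument.
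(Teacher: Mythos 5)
Your overall route is the paper's: identify $\gamma(\tau,\frac{i}{m},\frac{j}{m})$ with Coleman's coefficient up to the ratio of $\langle\cdot\rangle^{\varepsilon}$ factors coming from the basis change (\ref{differ}) together with \cite[Proposition 1.4]{Co}, and then translate Coleman's gamma values into the $\Gamma_p$ of Lemma \ref{pgamma}. Part (i) is indeed immediate, even more directly than you indicate: when $(p,m)=1$ the relevant arguments lie in $\mathbb Z_p$, where Coleman's function agrees with Morita's $\Gamma_p$, so no conversion via (\ref{pcol}) is needed at all and \cite[Theorem 1.7]{Co} already gives the stated formula.

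The genuine gap is in part (ii). You describe Coleman's coefficient as something ``expressed through $\Gamma_{\mathrm{col}}$'' which one converts term by term via (\ref{pcol}); but \cite[Proposition 3.12, Theorem 3.13]{Co} express the coefficient through the auxiliary functions $D_\tau$, $A_\tau$, $\Gamma_\tau$, where $D_\tau(z)$ is a weighted product $\prod_{k=1}^{f} A_\tau((z/2^k)_p)^{2^{k-1}/(2^f-1)}$ over repeated halvings of $z$ (with $f$ the order of $2$ in $(\mathbb Z_p/z^{-1}\mathbb Z_p)^\times$), and $\Gamma_\tau$ carries extra factors such as $(-p)^{\frac{\tau-p}{2(p-1)}}$ and $\kappa(z)^{\tau^{-1}(z)\tau-z}$. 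The entire content of the proof is to collapse this structure, and the tool that does it is the duplication formula in (\ref{feqofpg}) --- the very property the paper's $\Gamma_p$ was constructed to have. Concretely, combining (\ref{pcol}) with $\Gamma_p(2z_p)=2^{2z_p-\frac{1}{2}}\Gamma_p(z_p)\Gamma_p(z_p+\frac{1}{2})$ one first shows $A(z)\equiv 2^{\frac{1}{2}}\Gamma_p((2z)_p)/\Gamma_p(z_p)^2 \bmod \mu_\infty$ (with a case distinction according to $2(z_p)=(2z)_p$ or $(2z)_p+1$); then, since $(z/2^f)_p=z_p$, the weighted product defining $D(z)$ telescopes to $D(z)\equiv 2^{\frac{1}{2}}/\Gamma_p(z_p)$. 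Only after this does $D_\tau(z)\Gamma_\tau(z)$ reduce to $p^{\frac{1}{2}}\,p^{(\tau^{-1}(z)-z)\,\mathrm{ord}_p z}\,\Gamma_p(\tau^{-1}(z))/\Gamma_p(z)$, and one must still observe that the stray $p$-powers cancel across the three factors because $\mathrm{ord}_p\frac{i}{m}=\mathrm{ord}_p\frac{j}{m}=\mathrm{ord}_p\langle\frac{i}{m}+\frac{j}{m}\rangle$, and absorb the $\langle\cdot\rangle^{\varepsilon}$ factors via $\Gamma_p(\frac{i}{m}+\frac{j}{m})=(\langle\frac{i}{m}+\frac{j}{m}\rangle^*)^{\varepsilon(\frac{i}{m},\frac{j}{m})}\Gamma_p(\langle\frac{i}{m}+\frac{j}{m}\rangle)$. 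None of this is a matter of ``tracing which roots of unity are absorbed''; it is the structural computation that constitutes the proof of (ii), and your proposal leaves it unaddressed.
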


\begin{proof}
Our $\gamma(\tau,\frac{i}{m},\frac{j}{m})$ is Coleman's 
$\frac{\langle\tau(\frac{i}{m})+\tau(\frac{j}{m})\rangle^{\varepsilon(\tau(\frac{i}{m}),\tau(\frac{j}{m}))}}
{\langle\frac{i}{m}+\frac{j}{m}\rangle^{\varepsilon(\frac{i}{m},\frac{j}{m})}}\beta_\tau(\tau(\frac{i}{m}),\tau(\frac{j}{m}))$ by (\ref{differ}), \cite[Proposition 1.4]{Co}.
Therefore the former assertion follows from \cite[Theorem 1.7]{Co} immediately.
In the latter case, \cite[Proposition 3.12, Theorem 3.13]{Co} state
\begin{equation} \label{gammabeta}
\begin{split}
&\gamma(\tau,\tau^{-1}(\tfrac{i}{m}),\tau^{-1}(\tfrac{j}{m})) \\ 
&\equiv \frac{\langle\frac{i}{m}+\frac{j}{m}\rangle^{\varepsilon(\frac{i}{m},\frac{j}{m})}}
{\langle\tau^{-1}(\frac{i}{m})+\tau^{-1}(\frac{j}{m})\rangle^{\varepsilon(\tau^{-1}(\frac{i}{m}),\tau^{-1}(\frac{j}{m}))}}
\frac{D_{\tau}(\frac{i}{m})D_{\tau}(\frac{j}{m})\Gamma_{\tau}(\frac{i}{m})\Gamma_{\tau}(\frac{j}{m})}{D_{\tau}(\langle\frac{i}{m}+\frac{j}{m}\rangle)
\Gamma_{\tau}(\langle\frac{i}{m}+\frac{j}{m}\rangle)}
\bmod \mu_{\infty}.
\end{split}
\end{equation}
The notations are as follows: Coleman defined functions $D_{\tau},A_{\tau},\Gamma_{\tau}$ satisfying
\begin{equation*}
\begin{split}
D_{\tau}(z)&\equiv \prod_{k=1}^f(A_\tau((z/2^k)_p)^{\frac{2^{k-1}}{2^f-1}} \bmod \mu_{\infty}, \\
A_{\tau}(z)&=(2^{-\tau^{-1}((2z)_p)\tau+(2z)_p})^*
\frac{\Gamma_{\mathrm{col}}(z_p+\frac{(-1)^{(2z)_p}}{2})}{\Gamma_{\mathrm{col}}(\tau^{-1}(z_p)+\frac{(-1)^{\tau^{-1}((2z)_p)}}{2})}, \\
\Gamma_{\tau}(z)&=-p\Gamma_p(\tfrac{1}{2})(-p)^\frac{\tau-p}{2(p-1)}(2z)^\frac{\tau-1}{2} \kappa(z)^{\tau^{-1}(z)\tau-z}
\frac{\Gamma_{\mathrm{col}}(\tau^{-1}(z))}{\Gamma_{\mathrm{col}}(z)}
\end{split}
\end{equation*}
for $z \in \mathbb Q^\times \cap (0,1)$ with $\mathrm{ord}_p\,z <0$ (\cite[p179, p182]{Co}).
Here $f$ is the order of $2$ in $(\mathbb Z_p/ z^{-1}\mathbb Z_p)^\times$, 
$\kappa(z):=\frac{z}{z^*}$ and $z_p$ is the unique element in $\mathbb Z[\frac{1}{p}] \cap (0,1]$ satisfying $z\equiv z_p \bmod \mathbb Z_p$.
We define $x^{\frac{r}{m}}$ with $x \in \mathbb Q_p, r \in \mathbb Z[W_p], m \in \mathbb N$ as $(x^{\frac{1}{m}})^r$ by taking 
an $m$th root $x^{\frac{1}{m}} \in \overline{\mathbb Q_p}$ of $x$.
In order to define $(-1)^{z_p}$, we regard $z_p$ as its image 
under the composite map $\mathbb Z[\frac{1}{p}]\subset \mathbb Z_2 \rightarrow \mathbb Z_2/2\mathbb Z_2=\{0,1\}$.
We put
\begin{equation*}
\begin{split}
D(z)&:=\prod_{k=1}^fA((z/2^k)_p)^{\frac{2^{k-1}}{2^f-1}} \bmod \mu_{\infty}, \\
A(z)&:=2^{(2z)_p} \Gamma_{\mathrm{col}}(z_p+\tfrac{(-1)^{(2z)_p}}{2}) \bmod \mu_{\infty}.
\end{split}
\end{equation*}
Then by (\ref{pcol}) we have 
\begin{equation*}
A(z) \equiv 2^{(2z)_p} \frac{\Gamma_p(z_p+\frac{(-1)^{(2z)_p}}{2})}{\Gamma_p(z_p)} \bmod \mu_{\infty}.
\end{equation*}
Since $\Gamma_p(2(z_p))= 2^{2(z_p)-\frac{1}{2}} \Gamma_p(z_p)\Gamma_p(z_p+\frac{1}{2})$ by (\ref{feqofpg}), we get
\begin{equation*}
A(z) 
\equiv 2^{\frac{1}{2}+(2z)_p-2(z_p)} \frac{\Gamma_p(z_p+\frac{(-1)^{(2z)_p}}{2})\Gamma_p(2(z_p))}{\Gamma_p(z_p)^2\Gamma_p(z_p+\frac{1}{2})} \bmod \mu_{\infty}.
\end{equation*}
If $2(z_p)=(2z)_p$ then $(-1)^{(2z)_p}=1$ so we have
\begin{equation} \label{azgp}
A(z) \equiv 2^{\frac{1}{2}} \frac{\Gamma_p((2z)_p)}{\Gamma_p(z_p)^2} \bmod \mu_{\infty}.
\end{equation}
If $2(z_p)=(2z)_p+1$ then $(-1)^{(2z)_p}=-1$ so we get
\begin{equation*}
A(z) \equiv 2^{\frac{1}{2}-1} \frac{\Gamma_p(z_p-\frac{1}{2})\Gamma_p((2z)_p+1)}{\Gamma_p(z_p)^2\Gamma_p(z_p+\frac{1}{2})} \bmod \mu_{\infty}.
\end{equation*}
Hence the formula (\ref{azgp}) is also valid in this case 
since $\Gamma_p((2z)_p+1) = ((2z)_p)^*\Gamma_p((2z)_p)=(2(z_p)-1)^*\Gamma_p((2z)_p)$, 
$\Gamma_p(z_p+\frac{1}{2})= (z_p-\frac{1}{2})^*\Gamma_p(z_p-\frac{1}{2})$.
By (\ref{pcol}),(\ref{azgp}) and $(z/2^f)_p=z_p$, we see that
\begin{equation*}
\begin{split}
D(z)&\equiv 
\left (2^{\frac{1}{2}} \frac{\Gamma_p(z_p)}{\Gamma_p((z/2)_p)^2}\right )^{\frac{1}{2^f-1}}
\left (2^{\frac{1}{2}} \frac{\Gamma_p((z/2)_p)}{\Gamma_p((z/2^2)_p)^2}\right )^{\frac{2}{2^f-1}}
\dots
\left (2^{\frac{1}{2}} \frac{\Gamma_p((z/2^{f-1})_p)}{\Gamma_p((z/2^f)_p)^2}\right )^{\frac{2^{f-1}}{2^f-1}} \\
& \equiv \frac{2^{\frac{1}{2}}}{\Gamma_p(z_p)} \equiv \frac{2^{\frac{1}{2}}\Gamma_{\mathrm{col}}(z)}{\Gamma_p(z)} \bmod \mu_{\infty}.
\end{split}
\end{equation*}
We easily see that $D_\tau(z)\equiv \frac{D(z)}{D(\tau^{-1}(z))}$, $\Gamma_p(\frac{1}{2}) \equiv 1$ (by (\ref{perf})), 
$(-p)^\frac{\tau-p}{2(p-1)}\equiv p^{\frac{-1}{2}}$, 
$(2z)^\frac{\tau-1}{2} \equiv 1$, $\kappa(z)^{\tau^{-1}(z)\tau-z} \equiv \frac{p^{\tau^{-1}(z)\mathrm{ord}_p\,z}}{p^{z\mathrm{ord}_p\,z}} \bmod \mu_\infty$.
Therefore we can write
\begin{equation*}
D_\tau(z)\Gamma_\tau(z) \equiv p^{\frac{1}{2}}
\frac{p^{\tau^{-1}(z)\mathrm{ord}_p\,z}}{p^{z\mathrm{ord}_p\,z}}
\frac{\Gamma_p(\tau^{-1}(z))}{\Gamma_p(z)} \bmod \mu_\infty.
\end{equation*}
By substituting this into (\ref{gammabeta}), we get the desired result
\begin{equation*}
\gamma(\tau,\tau^{-1}(\tfrac{i}{m}),\tau^{-1}(\tfrac{j}{m})) 
\equiv p^{\frac{1}{2}} 
\frac{\Gamma_p(\tau^{-1}(\frac{i}{m}))}{\Gamma_p(\frac{i}{m})}
\frac{\Gamma_p(\tau^{-1}(\frac{j}{m}))}{\Gamma_p(\frac{j}{m})}
\frac{\Gamma_p(\frac{i}{m}+\frac{j}{m})}{\Gamma_p(\tau^{-1}(\frac{i}{m})+\tau^{-1}(\frac{j}{m}))} \bmod \mu_{\infty}
\end{equation*}
since $\mathrm{ord}_p\,\frac{i}{m}=\mathrm{ord}_p\,\frac{j}{m}=\mathrm{ord}_p\,\langle\frac{i}{m}+\frac{j}{m}\rangle$, 
$\Gamma_p(\frac{i}{m}+\frac{j}{m})=(\langle\frac{i}{m}+\frac{j}{m}\rangle^*)^{\varepsilon(\frac{i}{m},\frac{j}{m})}\Gamma_p(\langle\frac{i}{m}+\frac{j}{m}\rangle)
\equiv \frac{\langle\frac{i}{m}+\frac{j}{m}\rangle^{\varepsilon(\frac{i}{m},\frac{j}{m})}}
{p^{\varepsilon(\frac{i}{m},\frac{j}{m})\mathrm{ord}_p\,\langle\frac{i}{m}+\frac{j}{m}\rangle}}\Gamma_p(\langle\frac{i}{m}+\frac{j}{m}\rangle)$.
\end{proof}

\section{$p$-adic periods of Fermat curves} \label{pp}

We rewrite Theorem \ref{mt1} by using $p$-adic periods of Fermat curves.
Let $B_{\mathrm{cris}},B_{\mathrm{dR}}$ be the $p$-adic period rings introduced by Fontaine.
We may consider $\overline{\mathbb Q_p},B_{\mathrm{cris}}$ are subrings of $B_{\mathrm{dR}}$.
Then for any subfield $K$ of $\overline{\mathbb Q_p}$, the composite ring $B_{\mathrm{cris}}K \subset B_{\mathrm{dR}}$ is well-defined.
We denote by $\Phi_{\mathrm{cris}}$ the action of the absolute Frobenius on $B_{\mathrm{cris}}$
and define actions $\Phi_\tau$ ($\tau \in W_p$) on the ring $B_{\mathrm{cris}}K$ by
\begin{equation} \label{fronb}
\Phi_\tau:=\Phi_{\mathrm{cris}}^{\deg \tau}\otimes \tau.
\end{equation}

Let $K$, $\mathcal J_m$ be as in (\ref{pdr}): $K$ is a finite normal extension of $\mathbb Q_p$, $\mathcal J_m$ is a smooth model over $\mathcal O_K$ 
of the Jacobian variety $J_m$ of $F_m$. 
For simplicity, we fix embeddings $\overline{\mathbb Q} \hookrightarrow \mathbb C$, $\overline{\mathbb Q} \hookrightarrow \overline{\mathbb Q_p}$. 
The following results are well-known.
\begin{itemize}
\item There exists a canonical isomorphism between the singular cohomology group and the $p$-adic \'etale cohomology group:
\begin{equation} \label{ci1}
\mathrm H^1_{\mathrm{B}}(F_m(\mathbb C),\mathbb Q)\otimes_{\mathbb Q} \mathbb Q_p 
\cong \mathrm H^1_{p,\textrm{\'et}}(F_m\times_\mathbb Q \overline{\mathbb Q},\mathbb Q_p).
\end{equation}
\item There exists a canonical isomorphism between the $p$-adic \'etale cohomology group and the de Rham cohomology group:
\begin{equation} \label{ci2}
\mathrm H^1_{p,\textrm{\'et}}(F_m\times_\mathbb Q \overline{\mathbb Q},\mathbb Q_p)\otimes_{\mathbb Q_p} B_{\mathrm{cris}}K
\cong \mathrm{H_{dR}^1}(F_m,K)\otimes_K B_{\mathrm{cris}}K,
\end{equation}
which is compatible with the action of the Weil group $W_p$.
The element $\tau \in W_p$ acts on the left hand side by $1 \otimes \Phi_{\tau}$, 
on the right hand side by $\Phi_{\mathrm{cris}}^{\deg \tau} \otimes \Phi_{\tau}$.
It follows from the canonical isomorphism
\begin{equation*}
\begin{array}{ccc}
H^1_{p,\textrm{\'et}}(\mathcal J_m \times_{\mathcal O_K} \overline{\mathbb Q},\mathbb Q_p)\otimes_{\mathbb Q_p} B_{\mathrm{cris}} 
& \cong & \mathrm{H_{cris}^1}(\mathcal J_m\times_{\mathcal O_K} \mathbb F_K,W_K)\otimes_{W_K} B_{\mathrm{cris}} \\
1 \otimes \Phi_{\mathrm{cris}} & \leftrightarrow & \Phi_{\mathrm{cris}} \otimes \Phi_{\mathrm{cris}}
\end{array}
\end{equation*}
by identifying 
$\mathrm{H_{dR}^1}(F_m,K)=\mathrm{H_{cris}^1}(\mathcal J_m\times_{\mathcal O_K} \mathbb F_K,W_K)\otimes_{W_K} K$, 
$H^1_{p,\textrm{\'et}}(\mathcal J_m \times_{\mathcal O_K} \overline{\mathbb Q},\mathbb Q_p)
= \mathrm H^1_{p,\textrm{\'et}}(F_m\times_\mathbb Q \overline{\mathbb Q},\mathbb Q_p)$.
We note that the isomorphism (\ref{ci2}) is a restriction of the following canonical isomorphism 
\begin{equation} \label{ci4}
\mathrm H^1_{p,\textrm{\'et}}(F_m\times_\mathbb Q \overline{\mathbb Q},\mathbb Q_p)\otimes_{\mathbb Q_p} B_{\mathrm{dR}} 
\cong \mathrm H^1_{\mathrm{dR}}(F_m,\mathbb Q)\otimes_{\mathbb Q}B_{\mathrm{dR}}.
\end{equation}
\item The singular cohomology group is the dual of singular homology group. Namely, we have the non-degenerate pairing:
\begin{equation} \label{ci3}
\mathrm H_1(F_m(\mathbb C),\mathbb Q)\times \mathrm H^1_{\mathrm{B}}(F_m(\mathbb C),\mathbb Q) \rightarrow \mathbb Q.
\end{equation}
\end{itemize}
Combining (\ref{ci1}), (\ref{ci2}) and (\ref{ci3}), we get a period ring valued pairing 
\begin{equation} \label{pint}
\mathrm H_1(F_m(\mathbb C),\mathbb Q)\times \mathrm H^1_{\mathrm{dR}}(F_m,\mathbb Q) \rightarrow B_{\mathrm{cris}}K.
\end{equation}
We denote by $\int_{p,\gamma}\eta$ the image of $(\gamma,\eta)$ under the map (\ref{pint}).
This is a $p$-adic counterpart of 
the usual period $\int_{\gamma}\eta$, $\mathrm H_1(F_m(\mathbb C),\mathbb Q)\times \mathrm H^1_{\mathrm{dR}}(F_m,\mathbb Q) \rightarrow \mathbb C$.

\begin{thm} \label{main}
Let $p$ be a prime, $i,j,m$ integers satisfying $0<i,j<m$, $i+j\neq m$.
\begin{enumerate}
\item Assume that $(p,m)=1$. Then we have
\begin{equation*} 
\begin{split}
\Phi_\tau\left(\int_{p,\gamma} \eta_{\frac{i}{m},\frac{j}{m}}\right) 
= &\left (\prod_{k=1}^{\deg \tau} \frac{(-1)^{\varepsilon(\langle\frac{p^ki}{m}\rangle,\langle\frac{p^kj}{m}\rangle)}
p^{1-\varepsilon(\langle\frac{p^{k-1}i}{m}\rangle,\langle\frac{p^{k-1}j}{m}\rangle)}}
{B_p\langle\frac{p^ki}{m},\frac{p^kj}{m}\rangle}  \right) \\
& \times \frac{\langle \tau(\tfrac{i}{m})+\tau(\tfrac{j}{m}) \rangle^{\varepsilon(\tau(\frac{i}{m}),\tau(\frac{j}{m}))}}
{\langle \tfrac{i}{m}+\tfrac{j}{m} \rangle^{\varepsilon(\frac{i}{m},\frac{j}{m})}}
\int_{p,\gamma} \eta_{\tau(\frac{i}{m}),\tau(\frac{j}{m})}
\end{split}
\end{equation*}
for $\gamma \in \mathrm H_1(F_m(\mathbb C),\mathbb Q)$, $\tau\in W_p$ with $\deg \tau \geq 0$.
\item Assume that $p>2$, $p \mid m$, $(p,ij(i+j))=1$. Then we have
\begin{equation*}
\Phi_\tau\left(\frac{\int_{p,\gamma} \eta_{\frac{i}{m},\frac{j}{m}}}{B_p(\frac{i}{m},\frac{j}{m})}\right) 
\equiv p^{\frac{\deg\tau}{2}}
\frac{\int_{p,\gamma} \eta_{\tau(\frac{i}{m}),\tau(\frac{j}{m})}}{B_p(\tau(\frac{i}{m}),\tau(\frac{j}{m}))} \bmod \mu_{\infty}
\end{equation*}
for $\gamma \in \mathrm H_1(F_m(\mathbb C),\mathbb Q)$, $\tau\in W_p$.
\end{enumerate}
\end{thm}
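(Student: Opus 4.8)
The plan is to transfer Coleman's computation of the de Rham Frobenius (Theorem \ref{mt1}) to the period-ring valued pairing (\ref{pint}) through the comparison isomorphisms (\ref{ci1})--(\ref{ci3}), and then to iterate it. The first step is to establish the transfer formula
\[
\Phi_\tau\left(\int_{p,\gamma}\eta_{\frac{i}{m},\frac{j}{m}}\right)
=\gamma(\tau,\tfrac{i}{m},\tfrac{j}{m})\int_{p,\gamma}\eta_{\tau(\frac{i}{m}),\tau(\frac{j}{m})}
\qquad(\tau\in W_p)
\]
for every $\gamma\in \mathrm H_1(F_m(\mathbb C),\mathbb Q)$, with $\gamma(\tau,\frac{i}{m},\frac{j}{m})\in\mathbb Q_p$ the scalar of Theorem \ref{mt1}. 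To prove it I would take the preimage of $\eta_{\frac{i}{m},\frac{j}{m}}\otimes 1$ under (\ref{ci2}), write it as $\sum_l e_l\otimes b_l$ with $e_l$ \'etale classes and $b_l\in B_{\mathrm{cris}}K$, and recall that $\int_{p,\gamma}\eta_{\frac{i}{m},\frac{j}{m}}=\sum_l\langle\gamma,e_l\rangle b_l$ by (\ref{ci1}) and (\ref{ci3}). Applying the \'etale-side action $1\otimes\Phi_\tau$ and invoking the $W_p$-compatibility of (\ref{ci2}) (on the de Rham side $\tau$ acts by $\Phi_{\mathrm{cris}}^{\deg\tau}\otimes\Phi_\tau$), the $\mathbb Q$-rational class $\eta_{\frac{i}{m},\frac{j}{m}}$ is carried to $\gamma(\tau,\frac{i}{m},\frac{j}{m})\eta_{\tau(\frac{i}{m}),\tau(\frac{j}{m})}$ by Theorem \ref{mt1}; since this scalar lies in $\mathbb Q_p$ it is fixed by $\Phi_\tau$ and pulls out, and pairing against $\gamma$ yields the formula.

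Next I would record the cocycle relation $\gamma(\tau_1\tau_2,r,s)=\gamma(\tau_1,\tau_2(r),\tau_2(s))\gamma(\tau_2,r,s)$, which follows from the multiplicativity $\Phi_{\tau_1\tau_2}=\Phi_{\tau_1}\Phi_{\tau_2}$ (immediate from (\ref{fronb}) and the commutation of $\Phi_{\mathrm{cris}}$ with the Galois action) applied to $\eta_{r,s}$, using again the $\mathbb Q$-rationality of the classes. Since $\deg\colon W_p\to\mathbb Z$ lets me factor any $\tau$ of degree $n\geq1$ as a product of $n$ degree-one elements, and since for $(p,m)=1$ the extension $\mathbb Q_p(\zeta_m)/\mathbb Q_p$ is unramified, so every $\tau$ acts on $\mu_m$ by $\zeta\mapsto\zeta^{p^{\deg\tau}}$, i.e. $\tau(\frac{a}{m})=\langle\frac{p^{\deg\tau}a}{m}\rangle$, and the degree-one value of Theorem \ref{mt1}(i) depends only on this action, the cocycle relation gives $\gamma(\tau,\frac{i}{m},\frac{j}{m})=\prod_{k=1}^{\deg\tau}\gamma(\phi,\langle\frac{p^{k-1}i}{m}\rangle,\langle\frac{p^{k-1}j}{m}\rangle)$ for a fixed Frobenius lift $\phi$. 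Substituting the degree-one formula of Theorem \ref{mt1}(i), the factors $\langle\frac{p^{k-1}i}{m}+\frac{p^{k-1}j}{m}\rangle^{\varepsilon}$ in the numerator and denominator of consecutive terms telescope, leaving exactly $\frac{\langle\tau(\frac{i}{m})+\tau(\frac{j}{m})\rangle^{\varepsilon(\tau(\frac{i}{m}),\tau(\frac{j}{m}))}}{\langle\frac{i}{m}+\frac{j}{m}\rangle^{\varepsilon(\frac{i}{m},\frac{j}{m})}}$ times the asserted product of $(-1)^{\varepsilon}p^{1-\varepsilon}/B_p\langle\cdots\rangle$ factors; the case $\deg\tau=0$ is the trivial identity. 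Combined with the transfer formula this gives (i).

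For part (ii) I would work with the normalized period $P_{r,s}:=\int_{p,\gamma}\eta_{r,s}/B_p(r,s)$ and prove $\Phi_\tau(P_{\frac{i}{m},\frac{j}{m}})\equiv p^{\deg\tau/2}P_{\tau(\frac{i}{m}),\tau(\frac{j}{m})}\bmod\mu_\infty$. The case $\deg\tau=1$ comes from the transfer formula and Theorem \ref{mt1}(ii): one finds $\Phi_\tau(P_{\frac{i}{m},\frac{j}{m}})\equiv p^{1/2}\frac{B_p(\frac{i}{m},\frac{j}{m})}{\Phi_\tau(B_p(\frac{i}{m},\frac{j}{m}))}P_{\tau(\frac{i}{m}),\tau(\frac{j}{m})}$, so it remains to show $\Phi_\tau(B_p(\frac{i}{m},\frac{j}{m}))\equiv B_p(\frac{i}{m},\frac{j}{m})\bmod\mu_\infty$. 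Here $\Phi_\tau$ restricts to the Galois action $\tau$ on the algebraic number $B_p(\frac{i}{m},\frac{j}{m})$ (enlarging $K$ to contain it), and the key observation is that $B_p(\frac{i}{m},\frac{j}{m})=\exp_p(\ell)$ with $\ell\in\mathbb Q_p$, because each factor $\Gamma_p(\frac{i}{m})=\exp_p(L\Gamma_{p,1}(\cdots))$ has its $p$-adic logarithmic-gamma value in $\mathbb Q_p$ as argument and $\exp_p$ is a homomorphism. Since $\exp_p(\ell)^{p^e}=\exp_p(p^e\ell)\in\mathbb Q_p$ for $e$ large (as $p^e\ell\in p\mathbb Z_p$, where $\exp_p$ is the ordinary power series), $\exp_p(\ell)$ and $\tau(\exp_p(\ell))$ are two $p^e$-th roots of the same element of $\mathbb Q_p$, hence differ by a root of unity; this gives the congruence. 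Finally the multiplicativity $\Phi_{\tau_1\tau_2}=\Phi_{\tau_1}\Phi_{\tau_2}$, together with $\Phi_\tau(p^{1/2})\equiv p^{1/2}\bmod\mu_\infty$ and the persistence of the condition $(p,ij(i+j))=1$ under the Galois action on $\mu_m$, propagates the degree-one congruence to all $\tau\in W_p$ (negative degrees by inverting).

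I expect the main obstacle to be the bookkeeping in the first paragraph: correctly matching the two meanings of $\Phi_\tau$ (the action (\ref{fronb}) on $B_{\mathrm{cris}}K$ versus the de Rham Frobenius of \S\ref{col}) across the comparison isomorphism (\ref{ci2}), and checking that the $\mathbb Q$-rationality of $\eta_{\frac{i}{m},\frac{j}{m}}$ genuinely lets Theorem \ref{mt1} be applied with a scalar in $\mathbb Q_p$. The secondary delicate point is the Galois-invariance of $B_p$ values modulo $\mu_\infty$ in part (ii), which rests on the elementary but essential fact that $\exp_p$ of a $p$-adic number becomes $\mathbb Q_p$-rational after raising to a large $p$-power.
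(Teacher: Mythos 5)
Your proposal is correct and follows essentially the same route as the paper: the paper's proof also transfers Theorem \ref{mt1} to the pairing $\int_{p,\gamma}$ via the $W_p$-compatibility of (\ref{ci2}) and the trivial action on Betti/\'etale classes, notes $\tau(\tfrac{i}{m})=\langle\tfrac{p^{\deg\tau}i}{m}\rangle$ to iterate the degree-one formula in case (i), and in case (ii) uses exactly your observation that $\Gamma_p(\tfrac{i}{m})^{p^N}\in\mathbb Q_p$ for large $N$, so that $\tau(B_p(\tfrac{i}{m},\tfrac{j}{m}))\equiv B_p(\tfrac{i}{m},\tfrac{j}{m})\bmod\mu_\infty$. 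The only difference is one of exposition: you spell out the cocycle/telescoping step and the semilinearity bookkeeping that the paper compresses into ``follows immediately'' plus a parenthetical remark.
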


\begin{proof}
Since the isomorphism (\ref{ci2}) is compatible with the action of $W_p$ 
and $W_p$ acts trivially on $\mathrm H^1_{\mathrm{B}}(F_m(\mathbb C),\mathbb Q)\otimes_{\mathbb Q} \mathbb Q_p 
\cong \mathrm H^1_{p,\textrm{\'et}}(F_m\times_\mathbb Q \overline{\mathbb Q},\mathbb Q_p)$, 
the assertion follows from Theorem \ref{mt1} immediately.
(Note that $\tau(\frac{i}{m})=\langle \frac{p^{\deg \tau}i}{m} \rangle$ in the former case, 
and that $\tau(\Gamma_p(\frac{i}{m})) \equiv \Gamma_p(\frac{i}{m}) \bmod \mu_\infty$ 
since $\Gamma_p(\frac{i}{m})^{p^N} \in \mathbb Q_p$ for large enough $N$ in the latter case.)
\end{proof}

\section{A reciprocity law on a $B_{\mathrm{cris}}\overline{\mathbb Q_p}$-valued beta function} \label{defofbeta} 

We will formulate a ``reciprocity law on a period ring-valued beta function'' which is a refinement of (\ref{rlofs}).
We first define $B_{\mathrm{cris}}\overline{\mathbb Q_p}$-valued beta function by
\begin{equation*}
\mathfrak B(\tfrac{i}{m},\tfrac{j}{m})
:=
\begin{cases}
\dfrac{B(\frac{i}{m},\frac{j}{m})}{\int_{\gamma} \eta_{\frac{i}{m},\frac{j}{m}}}\langle \tfrac{i}{m}+\tfrac{j}{m} \rangle^{\varepsilon(\frac{i}{m},\frac{j}{m})}
\int_{p,\gamma} \eta_{\frac{i}{m},\frac{j}{m}}& ((p,m)=1), \\
\dfrac{B(\frac{i}{m},\frac{j}{m})}{\int_{\gamma} \eta_{\frac{i}{m},\frac{j}{m}}}\dfrac{\int_{p,\gamma} \eta_{\frac{i}{m},\frac{j}{m}}}{B_p(\frac{i}{m},\frac{j}{m})} 
& (p\geq 2,\ p \mid m,\ (p,ij(i+j))=1).
\end{cases}
\end{equation*}
for $i,j \in \mathbb N$ with $i,j<m$, $i+j\neq m$ 
and $\gamma \in \mathrm H_1(F_m(\mathbb C),\mathbb Q)$ with $\int_{\gamma} \eta_{\frac{i}{m},\frac{j}{m}}\neq 0$.

\begin{lmm}
The value $\mathfrak B(\frac{i}{m},\frac{j}{m}) \in B_{\mathrm{cris}}\overline{\mathbb Q_p}$ 
does not depend on the choice of $m$, $\gamma $ whenever $\int_{\gamma} \eta_{\frac{i}{m},\frac{j}{m}}\neq 0$.
\end{lmm}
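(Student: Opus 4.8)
The plan is to prove the two independence assertions separately: independence of $\gamma$ rests on comparing the complex and $p$-adic period functionals attached to the fixed differential, while independence of $m$ follows from the functoriality of those functionals under the covering maps $F_{dm}\to F_m$.

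For independence of $\gamma$ (with $m$ fixed) I would first record, using Rohrlich's formula (\ref{pisb}), that for any two cycles $\gamma_1,\gamma_2$ with $\int_{\gamma_\ell}\eta_{\frac{i}{m},\frac{j}{m}}\neq 0$ the ratio $q:=\int_{\gamma_2}\eta_{\frac{i}{m},\frac{j}{m}}/\int_{\gamma_1}\eta_{\frac{i}{m},\frac{j}{m}}$ lies in $\mathbb Q(\zeta_m)$, so that $B(\frac{i}{m},\frac{j}{m})/\int_{\gamma_\ell}\eta_{\frac{i}{m},\frac{j}{m}}\in\mathbb Q(\zeta_m)$ and every algebraic prefactor in the definition of $\mathfrak B$ makes sense $p$-adically via the fixed embedding $\overline{\mathbb Q}\hookrightarrow\overline{\mathbb Q_p}$. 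The crux is then that the complex functional $\lambda_\infty(\gamma)=\int_\gamma\eta_{\frac{i}{m},\frac{j}{m}}$ and the $p$-adic functional $\lambda_p(\gamma)=\int_{p,\gamma}\eta_{\frac{i}{m},\frac{j}{m}}$ have the same kernel on $\mathrm H_1(F_m(\mathbb C),\mathbb Q)\otimes\overline{\mathbb Q}$. To establish this I would use that $\eta_{\frac{i}{m},\frac{j}{m}}$ spans a one-dimensional eigenspace of $\mathrm H^1_{\mathrm{dR}}(F_m,\mathbb Q)$ for the $\mu_m\times\mu_m$-action and that the comparison isomorphisms (\ref{ci1}), (\ref{ci2}), (\ref{ci4}) are equivariant for this action; equivariance then forces the class of $\eta_{\frac{i}{m},\frac{j}{m}}$ to map to a pure tensor $\omega\otimes b$, with $\omega$ the Betti generator of the matching eigenline, so that both $\lambda_\infty$ and $\lambda_p$ are a nonzero period multiple of the single Betti pairing $\gamma\mapsto\langle\gamma,\omega\rangle$. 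Nonvanishing of the periods is guaranteed by (\ref{pisb2}) on the complex side and by the bijectivity of the comparison maps on the $p$-adic side. Setting $\gamma:=\gamma_2-q\gamma_1$, which lies in $\ker\lambda_\infty$ by construction, the equality of kernels gives $\lambda_p(\gamma)=0$, i.e. $\int_{p,\gamma_2}\eta_{\frac{i}{m},\frac{j}{m}}$ equals the image of $q$ under $\overline{\mathbb Q}\hookrightarrow\overline{\mathbb Q_p}$ times $\int_{p,\gamma_1}\eta_{\frac{i}{m},\frac{j}{m}}$; substituting this into the defining expression for $\mathfrak B$ makes the factor $q$ cancel against $B/\int_{\gamma_\ell}\eta_{\frac{i}{m},\frac{j}{m}}$, proving independence of $\gamma$.

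For independence of $m$ I would reduce, by passing to a common refinement of denominators, to comparing the representatives $(i,j,m)$ and $(di,dj,dm)$ and exploit the covering $\phi\colon F_{dm}\to F_m$, $(u,v)\mapsto(u^d,v^d)$. A direct computation gives $\phi^*\eta_{\frac{i}{m},\frac{j}{m}}=d\,\eta_{\frac{di}{dm},\frac{dj}{dm}}$, whence $\int_{\gamma'}\eta_{\frac{di}{dm},\frac{dj}{dm}}=\frac1d\int_{\phi_*\gamma'}\eta_{\frac{i}{m},\frac{j}{m}}$ and, by functoriality of the period pairing (\ref{pint}) under $\phi$, the identical relation for $\int_{p,\gamma'}$. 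Since $B(\frac{di}{dm},\frac{dj}{dm})=B(\frac{i}{m},\frac{j}{m})$, $B_p(\frac{di}{dm},\frac{dj}{dm})=B_p(\frac{i}{m},\frac{j}{m})$, and $\langle\cdot\rangle$ and $\varepsilon$ depend only on the underlying fractions, the two factors of $d$ cancel in $\mathfrak B$, so the level-$dm$ value computed with $\gamma'$ coincides with the level-$m$ value computed with $\phi_*\gamma'$; combined with the already-proved independence of $\gamma$ at each level this yields independence of $m$.

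Finally I would observe that the two cases of the definition never overlap and that scaling is forced to be prime to $p$: the hypotheses $(p,m)=1$ and $\bigl(p\mid m,\ (p,ij(i+j))=1\bigr)$ translate respectively into $\mathrm{ord}_p\frac{i}{m},\mathrm{ord}_p\frac{j}{m}\geq 0$ and $\mathrm{ord}_p\frac{i}{m},\mathrm{ord}_p\frac{j}{m}<0$, which are intrinsic to the rationals, so every admissible representative falls in one and the same case. The main obstacle is exactly the kernel equality $\ker\lambda_\infty=\ker\lambda_p$: the remaining steps are bookkeeping, but this one genuinely requires the one-dimensionality of the $\mu_m\times\mu_m$-eigenspaces of $\mathrm H^1(F_m)$ together with the equivariance of Fontaine's comparison isomorphisms, and it is here that the geometry of the Fermat curve is used.
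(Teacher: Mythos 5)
Your proof is correct and takes essentially the same route as the paper: the $m$-independence step (the covering $F_{dm}\to F_m$, $(x,y)\mapsto(x^d,y^d)$, the pullback formula $\phi^*\eta_{\frac{i}{m},\frac{j}{m}}=d\,\eta_{\frac{di}{dm},\frac{dj}{dm}}$, and cancellation of the factor $d$ by functoriality of both pairings) is identical to the paper's. Your $\gamma$-independence step is the cohomological rephrasing of the paper's argument — the paper uses that $\mathrm H_1(F_m(\mathbb C),\mathbb Q)$ is a free rank-one $\mathbb Q[G_m]$-module (Otsubo) on which $\eta_{\frac{i}{m},\frac{j}{m}}$ is a simultaneous eigenvector, which is exactly the one-dimensionality-of-eigenlines plus equivariance mechanism you invoke to identify the kernels of the two period functionals, so the cancellation is the same.
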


\begin{proof}
Put $G_m:=\mathbb Z/m\mathbb Z \oplus \mathbb Z/m\mathbb Z$. 
Then $H_1(F_m(\mathbb C),\mathbb Q)$ is the rank one $\mathbb Q[G_m]$-module in the sense of \cite[Proposition 4.9, (i)]{Ot}.
Let $\gamma_m \in \mathrm H_1(F_m(\mathbb C),\mathbb Q)$ be the generator.
Then we can write any element in $\mathrm H_1(F_m(\mathbb C),\mathbb Q)$ as $\rho \gamma_m$ with $\rho \in \mathbb Q[G_m]$. 
Hence the dependences of $\int_{\gamma}\eta_{\frac{i}{m},\frac{j}{m}}$, $\int_{p,\gamma}\eta_{\frac{i}{m},\frac{j}{m}}$ on $\gamma$ are canceled out
since $\eta_{\frac{i}{m},\frac{j}{m}}$ is a simultaneous eigenvector for any $g \in G_m$.
In order to see the dependence on $m$, we consider the canonical map $f \colon F_{nm} \rightarrow F_m$, $(x,y) \mapsto (x^n,y^n)$ ($2\leq n \in \mathbb N$).
Then we have $f^* \eta_{\frac{i}{m},\frac{j}{m}} = n\eta_{\frac{ni}{nm},\frac{nj}{nm}}$, so the assertion follows.
\end{proof}

By (\ref{pisb2}), we can take $\gamma_0$ so that $B(\frac{i}{m},\frac{j}{m})=\int_{\gamma_0} \eta_{\frac{i}{m},\frac{j}{m}}$.
Then we have
$\mathfrak B(\frac{i}{m},\frac{j}{m})=\langle \tfrac{i}{m}+\tfrac{j}{m} \rangle^{\varepsilon(\frac{i}{m},\frac{j}{m})}\int_{p,\gamma_0} \eta_{\frac{i}{m},\frac{j}{m}}$  
(resp.\ $\frac{\int_{p,\gamma_0} \eta_{\frac{i}{m},\frac{j}{m}}}{B_p(\frac{i}{m},\frac{j}{m})}$) if $(p,m)=1$ (resp.\ $p \mid m$).
Therefore Theorem \ref{main} implies the following reciprocity law on our beta function.

\begin{thm} \label{main3}
Let $p$ be a prime, $i,j,m$ integers satisfying $0<i,j<m$, $i+j\neq m$, and $\tau\in W_p$.
\begin{enumerate}
\item Assume that $(p,m)=1$, $\deg \tau \geq 0$. Then we have
\begin{equation*}
\Phi_\tau(\mathfrak B(\tfrac{i}{m},\tfrac{j}{m})) 
= \left (\prod_{k=1}^{\deg \tau} \frac{(-1)^{\varepsilon(\langle\frac{p^ki}{m}\rangle,\langle\frac{p^kj}{m}\rangle)}
p^{1-\varepsilon(\langle\frac{p^{k-1}i}{m}\rangle,\langle\frac{p^{k-1}j}{m}\rangle)}}
{B_p\langle\frac{p^ki}{m},\frac{p^kj}{m}\rangle}  \right)  \mathfrak B(\tau(\tfrac{i}{m}),\tau(\tfrac{j}{m})).
\end{equation*}
\item Assume that $p>2$, $p \mid m$, $(p,ij(i+j))=1$. Then we have
\begin{equation*}
\Phi_\tau(\mathfrak B(\tfrac{i}{m},\tfrac{j}{m})) \equiv p^{\frac{\deg \tau}{2}} \mathfrak B(\tau(\tfrac{i}{m}),\tau(\tfrac{j}{m})) \bmod \mu_{\infty}.
\end{equation*}
\end{enumerate}
\end{thm}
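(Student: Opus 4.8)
The plan is to obtain Theorem \ref{main3} as an immediate consequence of Theorem \ref{main}, once $\mathfrak B$ is put in the normalized form furnished by Rohrlich's cycle. First I would invoke (\ref{pisb2}) to fix a single $\gamma_0\in\mathrm H_1(F_m(\mathbb C),\mathbb Q)$ with $\int_{\gamma_0}\eta_{\frac{i}{m},\frac{j}{m}}=B(\tfrac{i}{m},\tfrac{j}{m})$ \emph{uniformly} in $(i,j)$; this makes the complex prefactor $\frac{B(\frac{i}{m},\frac{j}{m})}{\int_{\gamma_0}\eta_{\frac{i}{m},\frac{j}{m}}}$ equal to $1$, so that $\mathfrak B(\tfrac{i}{m},\tfrac{j}{m})=\langle\tfrac{i}{m}+\tfrac{j}{m}\rangle^{\varepsilon(\frac{i}{m},\frac{j}{m})}\int_{p,\gamma_0}\eta_{\frac{i}{m},\frac{j}{m}}$ when $(p,m)=1$, and $\mathfrak B(\tfrac{i}{m},\tfrac{j}{m})=\int_{p,\gamma_0}\eta_{\frac{i}{m},\frac{j}{m}}/B_p(\tfrac{i}{m},\tfrac{j}{m})$ when $p\mid m$. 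The same $\gamma_0$ simultaneously normalizes the image pair $(\tau(\tfrac{i}{m}),\tau(\tfrac{j}{m}))$, which is the point that lets me recognize $\mathfrak B$ of the image on the right-hand side.

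For part (i), I would apply $\Phi_\tau$ to the normalized expression. The prefactor $\langle\tfrac{i}{m}+\tfrac{j}{m}\rangle^{\varepsilon(\frac{i}{m},\frac{j}{m})}$ is a rational number, hence fixed by $\Phi_\tau=\Phi_{\mathrm{cris}}^{\deg\tau}\otimes\tau$ (both $\Phi_{\mathrm{cris}}$ and $\tau$ act trivially on $\mathbb Q$), so it pulls out unchanged. Substituting Theorem \ref{main}-(i) for $\Phi_\tau(\int_{p,\gamma_0}\eta_{\frac{i}{m},\frac{j}{m}})$, the outgoing factor $\langle\tfrac{i}{m}+\tfrac{j}{m}\rangle^{\varepsilon(\frac{i}{m},\frac{j}{m})}$ cancels against the denominator of the same shape appearing there, leaving exactly the displayed product times $\langle\tau(\tfrac{i}{m})+\tau(\tfrac{j}{m})\rangle^{\varepsilon(\tau(\frac{i}{m}),\tau(\frac{j}{m}))}\int_{p,\gamma_0}\eta_{\tau(\frac{i}{m}),\tau(\frac{j}{m})}$, which is the normalized form of $\mathfrak B(\tau(\tfrac{i}{m}),\tau(\tfrac{j}{m}))$. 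Part (ii) is even shorter: the normalized $\mathfrak B$ is already $\int_{p,\gamma_0}\eta/B_p$, so Theorem \ref{main}-(ii) yields $\Phi_\tau(\mathfrak B(\tfrac{i}{m},\tfrac{j}{m}))\equiv p^{\frac{\deg\tau}{2}}\,\mathfrak B(\tau(\tfrac{i}{m}),\tau(\tfrac{j}{m}))\bmod\mu_\infty$ verbatim.

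Since the analytic and crystalline content is entirely absorbed into Theorem \ref{main} (and through it into Coleman's and Rohrlich's formulas), no new hard estimate is needed; the work is bookkeeping rather than a genuine obstacle. The points I would check carefully are: (a) that $(\tau(\tfrac{i}{m}),\tau(\tfrac{j}{m}))$ is again admissible, i.e. it has denominator dividing $m$ (legitimate since $(p,m)=1$ in part (i)) and still satisfies $i'+j'\neq m$, which persists because $i+j\neq m$ forces $\zeta_m^{i+j}\neq1$ and the Galois action preserves nontriviality of this root of unity; (b) that the surviving exponent is correctly $\varepsilon(\tau(\tfrac{i}{m}),\tau(\tfrac{j}{m}))$ rather than $\varepsilon(\tfrac{i}{m},\tfrac{j}{m})$, so that the right-hand side genuinely matches $\mathfrak B$ of the image; and (c) compatibility of the $\gamma_0$-normalization with the $m$-independence established in the preceding lemma, which guarantees that using a single $\gamma_0$ for all indices (and any reduction of the denominator of $\tau(\tfrac{i}{m})$) does not affect $\mathfrak B$. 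The only real subtlety is keeping the root-of-unity ambiguity consistent in part (ii), where every identity holds only modulo $\mu_\infty$.
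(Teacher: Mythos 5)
Your proposal is correct and follows essentially the same route as the paper: the paper likewise uses (\ref{pisb2}) to choose $\gamma_0$ with $\int_{\gamma_0}\eta_{\frac{i}{m},\frac{j}{m}}=B(\tfrac{i}{m},\tfrac{j}{m})$, notes that $\mathfrak B$ then reduces to $\langle \tfrac{i}{m}+\tfrac{j}{m}\rangle^{\varepsilon(\frac{i}{m},\frac{j}{m})}\int_{p,\gamma_0}\eta_{\frac{i}{m},\frac{j}{m}}$ (resp.\ $\int_{p,\gamma_0}\eta_{\frac{i}{m},\frac{j}{m}}/B_p(\tfrac{i}{m},\tfrac{j}{m})$), and deduces the theorem directly from Theorem \ref{main}. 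Your additional checks (admissibility of the image pair, the cancellation of the $\varepsilon$-factors, and independence of choices) are exactly the bookkeeping the paper leaves implicit.
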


We prove the key formula (\ref{bpvsbc}) which states that 
the product $\mathfrak B(\frac{i}{m},\frac{j}{m})\mathfrak B(\frac{m-i}{m},\frac{m-j}{m})$ of our beta function is essentially equal to 
the product $B(\frac{i}{m},\frac{j}{m})B(\frac{m-i}{m},\frac{m-j}{m})$ of the classical beta function.
To do this, we prepare some Lemmas.
 
\begin{lmm} \label{btog2}
\begin{enumerate}
\item Assume that $(p,m)=1$. Then we have 
\begin{equation*}
B_p \langle \tfrac{i}{m},\tfrac{j}{m} \rangle B_p \langle \tfrac{m-i}{m},\tfrac{m-j}{m} \rangle =\pm 1.
\end{equation*}
\item Assume $p>2$, $p \mid m$, $(p,ij(i+j))=1$. Then we have 
\begin{equation*}
B_p(\tfrac{i}{m},\tfrac{j}{m})B_p(\tfrac{m-i}{m},\tfrac{m-j}{m}) \equiv (\tfrac{m}{m-i-j})^* \bmod \mu_\infty.
\end{equation*}
\end{enumerate}
\end{lmm}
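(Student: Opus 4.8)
The plan is to reduce both parts to reflection formulas for $\Gamma_p$ combined with the functional equation $\Gamma_p(z+1)=z^*\Gamma_p(z)$, after grouping the four gamma factors into complementary pairs $z\leftrightarrow 1-z$. For part (i), since $(p,m)=1$ every argument lies in $\mathbb{Z}_p$, so Morita's $\Gamma_p$ and the degenerate reflection (\ref{perf}) apply. I would first record $\langle\tfrac{i}{m}\rangle=\tfrac{i}{m}$, $\langle\tfrac{j}{m}\rangle=\tfrac{j}{m}$ and $\langle\tfrac{m-i}{m}\rangle=1-\tfrac{i}{m}$, $\langle\tfrac{m-j}{m}\rangle=1-\tfrac{j}{m}$. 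Expanding the two factors $B_p\langle\cdot,\cdot\rangle$ and multiplying, the numerator becomes $[\Gamma_p(\tfrac{i}{m})\Gamma_p(1-\tfrac{i}{m})][\Gamma_p(\tfrac{j}{m})\Gamma_p(1-\tfrac{j}{m})]=\pm1$ by (\ref{perf}); for the denominator I would check $\langle\tfrac{i+j}{m}\rangle+\langle\tfrac{2m-i-j}{m}\rangle=\langle\tfrac{i+j}{m}\rangle+\langle-\tfrac{i+j}{m}\rangle=1$ (using $i+j\not\equiv0\bmod m$), so (\ref{perf}) again yields $\pm1$. Hence the product is $\pm1$.

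For part (ii) the arguments now lie in $\mathbb{Q}_p-\mathbb{Z}_p$ (here $p\mid m$ and $(p,ij(i+j))=1$ force $\tfrac{i}{m},\tfrac{j}{m},\tfrac{i+j}{m}$ and their complements to have negative $p$-adic valuation), so I must first establish a reflection formula for the \emph{extended} $\Gamma_p$, namely $S(z):=\Gamma_p(z)\Gamma_p(1-z)\in\mu_\infty$ for $z\in\mathbb{Q}_p-\mathbb{Z}_p$. The proof would imitate the uniqueness argument in Lemma \ref{pgamma}. Using $\Gamma_p(z+1)=z^*\Gamma_p(z)$ together with $(-z)^*=z^*$ one finds $S(z+1)=S(z)$, hence $S(z+a)=S(z)$ for all $a\in\mathbb{Z}_p$ by continuity; using the duplication formula in (\ref{feqofpg}) for $z$ and for $\tfrac{1}{2}-z$ one finds that the powers of $2$ cancel and $S(2z)=S(z)S(z+\tfrac{1}{2})$, which becomes $S(2z)=S(z)^2$ because $\tfrac{1}{2}\in\mathbb{Z}_p$ for odd $p$. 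Writing $z=z_0/p^e$ with $z_0\in\mathbb{Z}_p^\times$ and picking $f,m'\in\mathbb{N}$ with $2^f=1+m'p^e$, I then get $S(z)^{2^f}=S(2^fz)=S(z+m'z_0)=S(z)$, so $S(z)^{2^f-1}=1$ and $S(z)$ is a root of unity.

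With the reflection formula in hand the assembly of part (ii) is bookkeeping. Setting $a=\tfrac{i}{m}$, $b=\tfrac{j}{m}$, I would regroup
\[
B_p(\tfrac{i}{m},\tfrac{j}{m})\,B_p(\tfrac{m-i}{m},\tfrac{m-j}{m})=\frac{[\Gamma_p(a)\Gamma_p(1-a)][\Gamma_p(b)\Gamma_p(1-b)]}{\Gamma_p(a+b)\Gamma_p(2-a-b)},
\]
and then rewrite the denominator via $\Gamma_p(2-a-b)=(1-a-b)^*\Gamma_p(1-a-b)$ as $(1-a-b)^*\,\Gamma_p(a+b)\Gamma_p(1-a-b)=(1-a-b)^*S(a+b)$. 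Since $S(a),S(b),S(a+b)\in\mu_\infty$, the only surviving factor modulo $\mu_\infty$ is $((1-a-b)^*)^{-1}=(\tfrac{1}{1-a-b})^*=(\tfrac{m}{m-i-j})^*$, which is precisely the asserted congruence.

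The main obstacle is the reflection formula for the extended $\Gamma_p$: one must carry out the two functional-equation manipulations cleanly—in particular verify via (\ref{propofep}) that $2^{2z-1/2}\cdot 2^{1/2-2z}=1$ and that $(-z)^*=z^*$—and confirm throughout that every argument that appears genuinely lies in $\mathbb{Q}_p-\mathbb{Z}_p$, so that the extended $\Gamma_p$ rather than Morita's is the relevant object. Once that formula is secured, the regrouping and the final identification of $(\tfrac{m}{m-i-j})^*$ are routine.
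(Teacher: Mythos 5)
Your proof is correct, and part (i) coincides with the paper's (both are immediate from (\ref{perf}), after the pairing of arguments you spell out). For part (ii) the skeleton also matches the paper's: reduce, via $\Gamma_p(z+1)=z^*\Gamma_p(z)$, to the reflection formula $\Gamma_p(z)\Gamma_p(1-z)\equiv 1 \bmod \mu_\infty$ on $\mathbb{Q}_p-\mathbb{Z}_p$, with the factor $(\tfrac{m}{m-i-j})^*$ produced by shifting the argument $2-a-b$ down by $1$. But you prove that reflection formula by a genuinely different argument. The paper computes it from the explicit construction of the extended $\Gamma_p$: by (\ref{defofgp}) it reduces to $L\Gamma_{p,1}(a,(a_1))+L\Gamma_{p,1}(a_1-a,(a_1))=0$, verified term by term in the series (\ref{lgp}) using $B_n(1-x)=(-1)^nB_n(x)$ and $\omega(a_1-a)=-\omega(a)$. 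You instead derive it axiomatically from the functional equations (\ref{feqofpg}) alone: $S(z):=\Gamma_p(z)\Gamma_p(1-z)$ is $\mathbb{Z}_p$-periodic (using $(-z)^*=z^*$, valid since $p$ is odd), satisfies $S(2z)=S(z)S(z+\tfrac12)=S(z)^2$ because the factors $2^{2z-\frac12}$ and $2^{\frac12-2z}$ cancel by (\ref{propofep}), and then the same trick $2^f=1+m'p^e$ used in the uniqueness part of Lemma \ref{pgamma} gives $S(z)^{2^f-1}=1$, i.e.\ $S(z)\in\mu_\infty$. Your multiplicative variant is sound; note it yields only membership in $\mu_\infty$ rather than the exact value $1$, which is all the lemma claims. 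As for what each approach buys: the paper's computation pins the constant down exactly for the $L\Gamma_{p,1}$-normalization (the ambiguity entering only through the choice made in Definition \ref{gp}), whereas your argument is independent of any explicit construction, applies uniformly to every continuous solution of (\ref{feqofpg}), and makes transparent why the statement is intrinsically one modulo $\mu_\infty$.
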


\begin{proof}
The first assertion follows from (\ref{perf}).
In the case of $\frac{i}{m},\frac{j}{m} \in \mathbb Q_p-\mathbb Z_p$, we have
\begin{equation*}
B_p(\tfrac{i}{m},\tfrac{j}{m})B_p(\tfrac{m-i}{m},\tfrac{m-j}{m})
= \frac{\Gamma_p(\frac{i}{m})\Gamma_p(\frac{j}{m})\Gamma_p(\frac{m-i}{m})\Gamma_p(\frac{m-j}{m})}{\Gamma_p(\frac{i+j}{m})\Gamma_p(\frac{m-i-j}{m})}
(\tfrac{m}{m-i-j})^*.
\end{equation*}
by (\ref{feqofpg}).
Therefore it suffices to show that $\Gamma_p(\alpha)\Gamma_p(1-\alpha) \equiv 1 \bmod \mu_\infty$ for $\alpha \in \mathbb Q_p-\mathbb Z_p$, which is 
a generalization of (\ref{perf}).
It reduces to the formula
\begin{equation*}
L\Gamma_p(a,(a_1))+L\Gamma_p(a_1-a,(a_1))=0
\end{equation*}
for $a \in \mathbb Z_p^\times$, $a_1 \in p\mathbb Z_p$ by (\ref{defofgp}).
This formula follows from a property of Bernoulli polynomials: $B_n(1-x)=(-1)^nB_n(x)$ ($0\leq n \in \mathbb Z$).
In fact, by (\ref{lgp}) we can write  
\begin{equation*}
\begin{split}
&L\Gamma_{p,1}(a,(a_1))+L\Gamma_p(a_1-a,(a_1)) \\
&=-\left(B_1(\tfrac{a}{a_1})+B_1(1-\tfrac{a}{a_1})\right) \\
&\quad -\sum_{k=1}^\infty \frac{(-1)^k}{k}
\sum_{l=0}^k\frac{k!}{l!(k-l)!}a_1^l\left(\omega(a)^{-l}B_{l+1}(\tfrac{a}{a_1})+(\omega(a_1-a))^{-l}B_{l+1}(1-\tfrac{a}{a_1})\right)(-1)^{k-l}.
\end{split}
\end{equation*}
Since $\omega(a_1-a)=-\omega(a)$, we have $\omega(a)^{-l}B_{l+1}(\frac{a}{a_1})+(\omega(a_1-a))^{-l}B_{l+1}(1-\frac{a}{a_1})=0$.
Then the assertion is clear.
\end{proof}

\begin{lmm} \label{btog3}
The ratio $\frac{\int_{p,\gamma_1} \eta_{\frac{i}{m},\frac{j}{m}}\int_{p,\gamma_2} \eta_{\frac{m-i}{m},\frac{m-j}{m}}}
{{\int_{\gamma_1} \eta_{\frac{i}{m},\frac{j}{m}}}\int_{\gamma_2} \eta_{\frac{m-i}{m},\frac{m-j}{m}}}$ 
does not depend on $\gamma_1,\gamma_2,i,j,m$ whenever 
$\int_{\gamma_1} \eta_{\frac{i}{m},\frac{j}{m}},\int_{\gamma_2} \eta_{\frac{m-i}{m},\frac{m-j}{m}}\neq 0$.
To be precise, there exists a constant $(2\pi i)_p \in B_{\mathrm{dR}}^\times$, which is a $p$-adic counterpart of $2\pi i$, satisfying 
\begin{equation*}
\frac{\int_{\gamma_1} \eta_{\frac{i}{m},\frac{j}{m}}\int_{\gamma_2} \eta_{\frac{m-i}{m},\frac{m-j}{m}}}{2\pi i}=
\frac{\int_{p,\gamma_1} \eta_{\frac{i}{m},\frac{j}{m}}\int_{p,\gamma_2} \eta_{\frac{m-i}{m},\frac{m-j}{m}}}{(2\pi i)_p} \in \overline{\mathbb Q}
\end{equation*}
for all $i,j,m$ with $0<i,j<m$, $i+j\neq m$ and for all $\gamma_1,\gamma_2 \in \mathrm H_1(F_m(\mathbb C),\mathbb Q)$.
Here we define $(2\pi i)_p$ as follows. Taking the projective line $\mathbb P^1$ ($=F_1: x+y=1$) 
and the basis $c_b \in \mathrm{H}_2(\mathbb P^1,\mathbb Q)$, 
$c_{dr} \in \mathrm{H}^2_{\mathrm{dR}}(\mathbb P^1,\mathbb Q)$, we put
\begin{equation*}
(2\pi i)_p:=\frac{(2\pi i) \int_{p,c_b} c_{dr}}{\int_{c_b} c_{dr}},
\end{equation*}
where $\int \colon \mathrm{H}_2(\mathbb P^1,\mathbb Q) \times \mathrm{H}^2_{\mathrm{dR}}(\mathbb P^1,\mathbb Q) \rightarrow \mathbb C$, 
$\int_p \colon \mathrm{H}_2(\mathbb P^1,\mathbb Q) \times \mathrm{H}^2_{\mathrm{dR}}(\mathbb P^1,\mathbb Q) \rightarrow B_{\mathrm{cris}}$ 
are given similarly to (\ref{pint}).
In particular, we see that $(2\pi i)_p \in B_{\mathrm{cris}}$ and that $\Phi_{\mathrm{cris}}((2\pi i)_p)=p(2\pi i)_p$ 
since $H^2_{p,\textrm{\'et}}(\mathbb P^1\times_\mathbb Q \overline{\mathbb Q},\mathbb Q_p) \cong 
H^1_{p,\textrm{\'et}}(\mathbb G_m\times_\mathbb Q \overline{\mathbb Q},\mathbb Q_p) \cong$ the Lefschetz motive $\mathbb Q_p(-1)$.
\end{lmm}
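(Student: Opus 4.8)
The plan is to deduce both assertions at once from the single identity
\begin{equation*}
\frac{\int_{p,\gamma_1}\eta_{\frac{i}{m},\frac{j}{m}}\int_{p,\gamma_2}\eta_{\frac{m-i}{m},\frac{m-j}{m}}}
{\int_{\gamma_1}\eta_{\frac{i}{m},\frac{j}{m}}\int_{\gamma_2}\eta_{\frac{m-i}{m},\frac{m-j}{m}}}
=\frac{\int_{p,c_b}c_{dr}}{\int_{c_b}c_{dr}}.
\end{equation*}
Its right-hand side is, by the very definition of $(2\pi i)_p$, equal to $\frac{(2\pi i)_p}{2\pi i}$ and involves none of $i,j,m,\gamma_1,\gamma_2$; so granting the identity, the independence statement is immediate, the displayed equation of the lemma is a rearrangement, and membership of the common value in $\overline{\mathbb Q}$ reduces to the algebraicity of the cup-product datum below (which one may cross-check at $\gamma_1=\gamma_2=\gamma_0$: there (\ref{pisb2}) and the reflection formula (\ref{erf}) collapse the four $\Gamma$-factors into a single $\pi$, exhibiting the complex side as $2\pi i$ times a product of values $\sin(\pi\tfrac{k}{m})$, which is algebraic).

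First I would remove the dependence on $\gamma_1,\gamma_2$ and on $m$ exactly as in the preceding lemma. Since $\mathrm H_1(F_m(\mathbb C),\mathbb Q)$ is free of rank one over $\mathbb Q[G_m]$ with generator $\gamma_m$ (\cite[Proposition 4.9]{Ot}) and each $\eta_{\frac{i}{m},\frac{j}{m}}$ is a $G_m$-eigenvector with character $\chi_{i,j}(a,b)=\zeta_m^{ai+bj}$, writing $\gamma_a=\rho_a\gamma_m$ with $\rho_a\in\mathbb Q[G_m]$ gives $\int_{\gamma_a}\eta=\chi(\rho_a)\int_{\gamma_m}\eta$ for the relevant $\eta$, with the \emph{same} algebraic scalar $\chi(\rho_a)\in\mathbb Q(\zeta_m)$ occurring in the complex and in the $p$-adic period, because the $G_m$-action is defined over $\mathbb Q$ and hence compatible with all of (\ref{ci1})--(\ref{ci4}). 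These scalars cancel in the ratio, reducing to $\gamma_1=\gamma_2=\gamma_m$. The independence of $m$ then follows from the functoriality of the comparison isomorphisms under the covering $f\colon F_{nm}\to F_m$ together with $f^{*}\eta_{\frac{i}{m},\frac{j}{m}}=n\eta_{\frac{ni}{nm},\frac{nj}{nm}}$, the factor $n$ appearing identically in numerator and denominator.

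The heart of the matter is to recognise the product of periods as a period of the Lefschetz motive. The cup product $\mathrm H^1_{\mathrm{dR}}(F_m,\mathbb Q)^{\otimes 2}\to\mathrm H^2_{\mathrm{dR}}(F_m,\mathbb Q)$ is defined over $\mathbb Q$ and, under the canonical identification $h^2(F_m)\cong\mathbb Q(-1)\cong h^2(\mathbb P^1)$, is compatible with the complex comparison, with the $p$-adic comparison (\ref{ci4}) and its $\mathrm H^2$-analogue, and with the $W_p$-action. Because $\chi_{i,j}^{-1}=\chi_{m-i,m-j}$, the classes $\eta_{\frac{i}{m},\frac{j}{m}}$ and $\eta_{\frac{m-i}{m},\frac{m-j}{m}}$ span Poincar\'e-dual lines, so their cup product is a nonzero \emph{rational} multiple $c_{i,j}$ of the canonical generator of $\mathrm H^2_{\mathrm{dR}}(F_m,\mathbb Q)$, matched with the generator $c_{dr}$ on $\mathbb P^1$ under all three realisations. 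Pairing against the (one-dimensional) eigencomponents of $\gamma_1,\gamma_2$—the remaining components are annihilated since $\eta$ is an eigenvector—turns $\int_{\gamma_1}\eta_{\frac{i}{m},\frac{j}{m}}\int_{\gamma_2}\eta_{\frac{m-i}{m},\frac{m-j}{m}}$ into an algebraic multiple of the complex Lefschetz period $\int_{c_b}c_{dr}$, while the identical construction in the $p$-adic realisation yields the \emph{same} algebraic multiple times the $p$-adic Lefschetz period $\int_{p,c_b}c_{dr}$. The proportionality factor is literally the same in both cases because it is read off from the rational datum $c_{i,j}$ and the rational homological pairing, which are common to the two realisations via (\ref{ci1}). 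Dividing the two identities gives the identity displayed at the beginning, and with it the lemma.

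The step I expect to be the main obstacle is the rigorous justification that the cup product is compatible with the $p$-adic comparison (\ref{ci4}) and with the $W_p$-action, and in particular that the proportionality factor is genuinely the \emph{same} algebraic number on the complex and on the $p$-adic side; this amounts to checking that $\eta_{\frac{i}{m},\frac{j}{m}}\cup\eta_{\frac{m-i}{m},\frac{m-j}{m}}\in\mathrm H^2_{\mathrm{dR}}(F_m,\mathbb Q)$ has Betti--de Rham and \'etale--de Rham images corresponding under the rational Betti--\'etale comparison (\ref{ci1}), together with a Riemann-bilinear-type identity relating the product of periods to the cup product in the present eigenspace setting. Finally, the assertions $(2\pi i)_p\in B_{\mathrm{cris}}$ and $\Phi_{\mathrm{cris}}((2\pi i)_p)=p\,(2\pi i)_p$ follow from $h^2(\mathbb P^1)\cong\mathbb Q_p(-1)$: the crystalline Frobenius acts on the second cohomology of a genus-zero curve by multiplication by $p$, so $(2\pi i)_p$ is a crystalline period of the Lefschetz motive with the stated Frobenius eigenvalue.
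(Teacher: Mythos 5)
Your proposal follows essentially the same route as the paper: both arguments reduce the lemma to the fact that $\mathrm H^2$ of the Fermat curve is the Lefschetz motive pulled back from $\mathbb P^1$ via $\iota_m\colon F_m\to F_1$, both exploit that the Betti--de Rham comparison and the $p$-adic comparisons (\ref{ci1}), (\ref{ci4}) are ring isomorphisms under cup products, and both hinge on the non-vanishing of $\eta_{\frac{i}{m},\frac{j}{m}}\cup\eta_{\frac{m-i}{m},\frac{m-j}{m}}$ in the one-dimensional space $\mathrm H^2_{\mathrm{dR}}(F_m,\mathbb Q)$. There are two real points of divergence. First, the non-vanishing itself: you deduce it abstractly from Poincar\'e duality together with $G_m$-equivariance (the $\chi_{i,j}$- and $\chi_{i,j}^{-1}$-eigenlines must pair perfectly), whereas the paper computes the cup product explicitly in \v{C}ech cohomology for the covering $U_1^{(m)},U_2^{(m)}$, obtaining $\eta_{\frac{i}{m},\frac{j}{m}}\cup\eta_{\frac{m-i}{m},\frac{m-j}{m}}=\iota_m^*\bigl(\tfrac{1}{m(i+j-m)}\tfrac{dx}{1-x}\bigr)$, non-zero because of the residue at $x=1$. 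Your argument is more conceptual and shorter; the paper's is self-contained and identifies the rational constant explicitly. Second, the step you flag as ``the main obstacle'' --- a Riemann-bilinear-type identity guaranteeing that the proportionality factor between the period product and the $H^2$-period is the \emph{same} algebraic number in the complex and $p$-adic realizations --- is precisely what the paper makes rigorous, and it does so with exactly the ingredients you list: using CM (one-dimensionality of the eigenspaces) it chooses classes $\gamma_i^*\in\mathrm H^1_{\mathrm B}(F_m,\mathbb Q(\zeta_m))$ normalized by the rational pairing (\ref{ci3}) so that $\eta\otimes 1\leftrightarrow\gamma_i^*\otimes\int_{\gamma_i}\eta$ under either comparison, then cups these relations; since $\gamma_1^*\cup\gamma_2^*$ and $\eta_{\frac{i}{m},\frac{j}{m}}\cup\eta_{\frac{m-i}{m},\frac{m-j}{m}}$ are rational classes in one-dimensional spaces, the complex and $p$-adic period products are forced to be the same algebraic multiple of the respective $H^2$-periods. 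So your outline is completable with no new idea beyond what you already wrote down. Two minor slips: the $G_m$-action is defined over $\mathbb Q(\zeta_m)$, not over $\mathbb Q$ (harmless, since the eigenvalues are algebraic and the embeddings $\overline{\mathbb Q}\hookrightarrow\mathbb C$, $\overline{\mathbb Q}\hookrightarrow\overline{\mathbb Q_p}$ are fixed), and your preliminary reduction to $\gamma_1=\gamma_2=\gamma_m$ is redundant, since your eigencomponent argument handles arbitrary $\gamma_1,\gamma_2$ directly, as does the paper's.
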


\begin{proof}
Let $\gamma_1,\gamma_2 \in \mathrm H_1(F_m(\mathbb C),\mathbb Q)$ satisfy $\int_{\gamma_1} \eta_{\frac{i}{m},\frac{j}{m}},
\int_{\gamma_2} \eta_{\frac{m-i}{m},\frac{m-j}{m}}\neq 0$. 
First we note that we can write via the de Rham isomorphism
\begin{equation*}
\begin{array}{ccc}
\mathrm H^1_{\mathrm{B}}(F_m(\mathbb C),\mathbb Q(\zeta_m))\otimes_{\mathbb Q(\zeta_m)} \mathbb C 
& \cong & \mathrm{H_{dR}^1}(F_m,\mathbb Q)\otimes_{\mathbb Q} \mathbb C \\
\gamma_1^*\otimes \int_{\gamma_1} \eta_{\frac{i}{m},\frac{j}{m}} & \leftrightarrow & \eta_{\frac{i}{m},\frac{j}{m}} \otimes 1
\end{array}
\end{equation*}
with an element $\gamma_1^* \in \mathrm H^1_{\mathrm{B}}(F_m(\mathbb C),\mathbb Q(\zeta_m))$.
(By the complex multiplication, we can take a tentative element $\gamma_1^*$ 
so that $\gamma_1^* \otimes c \leftrightarrow \eta_{\frac{i}{m},\frac{j}{m}} \otimes 1$ with $c \in \mathbb C^\times$.
Then we normalize it as $\frac{\gamma_1^*}{(\gamma_1,\gamma_1^*)}$ by using the paring $(\gamma_1,\gamma_1^*)$ of (\ref{ci3}).)
We take $\gamma_2^*$ in the same manner.
Since the de Rham isomorphism is a ring isomorphism under cup products, we have
\begin{equation*}
\begin{array}{ccc}
\mathrm H_B^2(F_m(\mathbb C),\mathbb Q(\zeta_m))\otimes_{\mathbb Q(\zeta_m)} \mathbb C 
& \cong & \mathrm{H_{dR}^2}(F_m,\mathbb Q)\otimes_{\mathbb Q} \mathbb C \\
(\gamma_1^* \cup \gamma_2^*) \otimes (\int_{\gamma_1} \eta_{\frac{i}{m},\frac{j}{m}}\int_{\gamma_2} \eta_{\frac{m-i}{m},\frac{m-j}{m}}) & \leftrightarrow & 
(\eta_{\frac{i}{m},\frac{j}{m}} \cup \eta_{\frac{m-i}{m},\frac{m-j}{m}}) \otimes 1.
\end{array}
\end{equation*}
Similarly we have
\begin{equation*}
\begin{array}{ccc}
\mathrm H_B^2(F_m(\mathbb C),\mathbb Q(\zeta_m))\otimes_{\mathbb Q(\zeta_m)} B_{\mathrm{dR}} 
& \cong & \mathrm{H_{dR}^2}(F_m,\mathbb Q)\otimes_{\mathbb Q} B_{\mathrm{dR}} \\
(\gamma_1^* \cup \gamma_2^*) \otimes (\int_{p,\gamma_1} \eta_{\frac{i}{m},\frac{j}{m}}\int_{p,\gamma_2} \eta_{\frac{m-i}{m},\frac{m-j}{m}}) & \leftrightarrow & 
(\eta_{\frac{i}{m},\frac{j}{m}} \cup \eta_{\frac{m-i}{m},\frac{m-j}{m}}) \otimes 1,
\end{array}
\end{equation*}
since (\ref{ci1}), (\ref{ci4}) are also ring isomorphisms.
We note that $\dim_{\mathbb Q}\mathrm{H_{dR}^2}(F_m,\mathbb Q)=1$ and that 
the canonical map $\iota_m \colon F_m \rightarrow F_1$, $(x,y) \mapsto (x^m,y^m)$ induces 
isomorphisms $\mathrm H_*^2(\mathbb P^1,\mathbb Q) \cong \mathrm H_*^2(F_m,\mathbb Q)$ ($*=\mathrm{B},\mathrm{dR}$).
Hence it suffices to show that 
the cup product $\eta_{\frac{i}{m},\frac{j}{m}} \cup \eta_{\frac{m-i}{m},\frac{m-j}{m}} \neq 0$.
In the following of this proof, we compute $\eta_{\frac{i}{m},\frac{j}{m}} \cup \eta_{\frac{m-i}{m},\frac{m-j}{m}}$ directly by using \v{C}ech cohomology.
We take the covering $U_1^{(m)},U_2^{(m)}$ of the projective Fermat curve $F_m: X^m+Y^m=Z^m$ so that they corresponds to $Z\neq 0$, $Y\neq 0$ respectively.
Namely, $U_1^{(m)}$ (resp.\ $U_2^{(m)}$) is the affine curve defined by $x^m+y^m=1$ with $x=X/Z$, $y=Y/Z$ (resp.\ $x'^m+1=z'^m$ with $x'=X/Y$, $z'=Z/Y$).
Then we can write
\begin{equation*}
\begin{split}
&\mathrm{H_{dR}^1}(F_m,\mathbb Q)
=\frac{\{(\omega_1,\omega_2,f) \mid \omega_i \in \Omega^1(U_i^{(m)}),f \in \Omega^0(U_1^{(m)}\cap U_2^{(m)}),\omega_1-\omega_2=df\}}
{\{(df_1,df_2,f_1-f_2) \mid f_i \in \Omega^0(U_i^{(m)})\}}, \\
&\mathrm{H_{dR}^2}(F_m,\mathbb Q)
=\frac{\Omega^1(U_1^{(m)}\cap U_2^{(m)})}
{\{\omega_1-\omega_2-df \mid \omega_i \in \Omega^1(U_i^{(m)}),f \in \Omega^0(U_1^{(m)}\cap U_2^{(m)})\}}, \\
&(\omega_1,\omega_2,f) \cup (\omega_1',\omega_2',f')=-f'\omega_1+f \omega_2', \\
&\eta_{\frac{i}{m},\frac{j}{m}} =
\begin{cases}
(x^{i-1}y^{j-m}dx,x'^{i-m}z'^{m-i-j-1}dz',0) & \text{ if } i+j<m, \\
(x^{i-1}y^{j-m}dx,\frac{j-m}{i+j-m}x'^{i-m}z'^{2m-i-j-1}dz',\frac{1}{i+j-m}x^{i}y^{j-m}) & \text{ if } i+j>m. \\
\end{cases}
\end{split}
\end{equation*}
Therefore we get for $i,j$ with $i+j<m$
\begin{equation*}
\eta_{\frac{i}{m},\frac{j}{m}} \cup \eta_{\frac{m-i}{m},\frac{m-j}{m}}=\tfrac{x^{m}y^{-m}}{i+j-m}\tfrac{dx}{x}=
\iota_m^*(\tfrac{1}{m(i+j-m)}\tfrac{dx}{1-x}).
\end{equation*}
We easily see that $\tfrac{1}{m(i+j-m)}\tfrac{dx}{1-x}$ is non-zero in 
\begin{equation*}
\begin{split}
\mathrm{H_{dR}^2}(F_1,\mathbb Q)
&=\frac{\Omega^1(U_1^{(1)}\cap U_2^{(1)})}
{\{\omega_1-\omega_2-df \mid \omega_i \in \Omega^1(U_i^{(1)}),f \in \Omega^0(U_1^{(1)}\cap U_2^{(1)})\}} \\
&=\frac{\mathbb Q[x,\frac{1}{x-1}]dx}{\mathbb Q[x]dx+\mathbb Q[\frac{1}{x-1}]\frac{dx}{(x-1)^2}+d(\mathbb Q[x,\frac{1}{x-1}])}
\end{split}
\end{equation*}
since $\mathrm{Res}_{x=1}\frac{dx}{1-x}\neq 0$.
Hence the assertion is clear.
\end{proof}

By Lemma \ref{sisprofb} and (\ref{pisb}), 
the monomial relations $\frac{\int_{\gamma_1} \eta_{\frac{i}{m},\frac{j}{m}}\int_{\gamma_2} \eta_{\frac{m-i}{m},\frac{m-j}{m}}}{2\pi i} \in \overline{\mathbb Q}$ 
in Lemma \ref{btog3} imply the algebraicity of Stark units over the rational number field. Namely, we have the following Corollary.

\begin{crl} \label{crl1}
{\rm (Alg$'$)} for $F=\mathbb Q$ holds true. That is, we have
\begin{equation*}
u_\mathbb Q(\sigma) \in \overline{\mathbb Q} \qquad (\sigma \in \mathrm{Gal}(\mathbb Q(\zeta_m)^+/\mathbb Q)).
\end{equation*}
\end{crl}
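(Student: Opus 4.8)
The plan is to combine the factorization of $\Gamma(\tfrac{a}{m})\Gamma(\tfrac{m-a}{m})$ from Lemma \ref{sisprofb} with the transcendence input of Lemma \ref{btog3}, so as to cancel all powers of $\pi$. By (\ref{suoverq}) it suffices to treat $K=\mathbb Q(\zeta_m)^+$ and $\sigma=\sigma_{\pm\frac{a}{m}}$ with $(a,m)=1$, $0<a<\tfrac{m}{2}$, for which $u_\mathbb Q(\sigma)=(2\pi)^2\gamma^{-2}$, where I write $\gamma:=\Gamma(\tfrac{a}{m})\Gamma(\tfrac{m-a}{m})>0$. Since $\overline{\mathbb Q}$ is algebraically closed, it is enough to exhibit a single $N\geq 1$ with $u_\mathbb Q(\sigma)^N\in\overline{\mathbb Q}$; I would take $N=2^t(2^{f_{m_0}}-1)$ (resp.\ $N=2^{t-1}$ in the degenerate case $\tfrac{a}{m_0}=1$), matching the exponent on the left-hand side of Lemma \ref{sisprofb}.

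Next I would record that every diagonal product of beta values is an algebraic multiple of $2\pi i$. Fixing $\gamma_0$ as in (\ref{pisb2}), so that $\int_{\gamma_0}\eta_{\frac{a'}{m'},\frac{a'}{m'}}=B(\tfrac{a'}{m'},\tfrac{a'}{m'})\neq 0$, Lemma \ref{btog3} applied with $\gamma_1=\gamma_2=\gamma_0$ gives
\begin{equation*}
\frac{B(\tfrac{a'}{m'},\tfrac{a'}{m'})\,B(\tfrac{m'-a'}{m'},\tfrac{m'-a'}{m'})}{2\pi i}\in\overline{\mathbb Q}
\end{equation*}
for all admissible $a',m'$, with the left-hand side moreover independent of the modulus by Lemma \ref{btog3}, so the relation applies to every factor produced by Lemma \ref{sisprofb} regardless of its denominator. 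The single leftover factor $B(\tfrac12,\tfrac12)=\pi$ occurring in the degenerate case is likewise an algebraic multiple of $2\pi i$. Substituting these relations (and absorbing the rational coefficients and the sign of Lemma \ref{sisprofb} into an algebraic constant) into the factorization of $\gamma^{N}$ yields $\gamma^{N}=A\,(2\pi i)^{M}$ with $A\in\overline{\mathbb Q}^\times$, where $M$ is the number of beta-pairs counted with their exponents.

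The decisive step, which I expect to be the main obstacle, is the identity $M=N$: it is exactly what makes the transcendental factors cancel. In the generic case a direct count gives
\begin{align*}
M&=\sum_{k=1}^{t}2^{k-1}(2^{f_{m_0}}-1)+\sum_{l=0}^{f_{m_0}-1}2^{f_{m_0}-1-l}\\
&=(2^{f_{m_0}}-1)(2^{t}-1)+(2^{f_{m_0}}-1)=2^{t}(2^{f_{m_0}}-1)=N,
\end{align*}
and the analogous count $M=1+\sum_{k=2}^{t}2^{k-2}=2^{t-1}=N$ settles the degenerate case. Granting $M=N$, we get $\gamma^{2N}=A^{2}(2\pi i)^{2N}$, hence
\begin{equation*}
u_\mathbb Q(\sigma)^{N}=(2\pi)^{2N}\gamma^{-2N}=A^{-2}\,i^{-2N}=(-1)^{N}A^{-2}\in\overline{\mathbb Q},
\end{equation*}
the powers of $2\pi$ cancelling precisely; therefore $u_\mathbb Q(\sigma)\in\overline{\mathbb Q}$. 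I stress that $M=N$ is forced because both $\gamma$ and each diagonal beta-pair carry weight one in $\pi$, but since the aim is to avoid Euler's reflection formula (\ref{erf}) I would establish $M=N$ by the explicit count above rather than by this heuristic.
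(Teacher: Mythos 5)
Your proposal is correct and takes essentially the same route as the paper: the paper's own (one-sentence) proof of Corollary \ref{crl1} likewise combines the factorization of Lemma \ref{sisprofb} with the monomial relation $B(\tfrac{i}{m},\tfrac{j}{m})B(\tfrac{m-i}{m},\tfrac{m-j}{m}) \in 2\pi i\,\overline{\mathbb Q}$ coming from Lemma \ref{btog3} together with (\ref{pisb}) (you invoke (\ref{pisb2}) instead, which is equivalent for this purpose), with the cancellation of the powers of $\pi$ left implicit there. Your explicit count $M=N$ and your separate handling of the leftover factor $B(\tfrac12,\tfrac12)$ in the degenerate case simply spell out details the paper leaves unsaid.
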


By Lemma \ref{btog2}-\rm{(ii)},Lemma \ref{btog3} and $\langle \tfrac{i}{m}+\tfrac{j}{m} \rangle^{\varepsilon(\frac{i}{m},\frac{j}{m})}
\langle \tfrac{m-i}{m}+\tfrac{m-j}{m} \rangle^{\varepsilon(\frac{m-i}{m},\frac{m-j}{m})}=\pm(\tfrac{i}{m}+\tfrac{j}{m}-1)$, we can write
\begin{equation} \label{bpvsbc}
\frac{\mathfrak B(\tfrac{i}{m},\tfrac{j}{m})\mathfrak B(\tfrac{m-i}{m},\tfrac{m-j}{m})}{(2 \pi i)_p} \\
\equiv \left(1-\tfrac{i}{m}-\tfrac{j}{m}\right)^* \frac{B(\frac{i}{m},\frac{j}{m})B(\frac{m-i}{m},\frac{m-j}{m})}{2 \pi i} \bmod \mu_\infty
\end{equation}
in both cases: $(p, m)=1$ or $p>2$, $p \mid m$, $(p,ij(i+j))=1$.  
Therefore Lemma \ref{btog3} and (\ref{pisb}) imply not only $B(\tfrac{i}{m},\tfrac{j}{m})B(\tfrac{m-i}{m},\tfrac{m-j}{m})\pi^{-1} \in \overline{\mathbb Q}$,
but also $\mathfrak B(\tfrac{i}{m},\tfrac{j}{m})\mathfrak B(\tfrac{m-i}{m},\tfrac{m-j}{m})(2 \pi i)_p^{-1} \in \overline{\mathbb Q}$.

Finally in this section, we derive (Rec$'$) $\bmod \mu_\infty$ in the case of $F=\mathbb Q$ from Theorem \ref{main3}.
By Lemma \ref{btog2}-\rm{(i)} and $\varepsilon(\frac{i}{m},\frac{j}{m})+\varepsilon(\frac{m-i}{m},\frac{m-j}{m})=1$,
we see that Theorem \ref{main3} implies
\begin{equation*}
\Phi_\tau(\mathfrak B(\tfrac{i}{m},\tfrac{j}{m})\mathfrak B(\tfrac{m-i}{m},\tfrac{m-j}{m})) 
\equiv p^{\deg \tau} \mathfrak B(\tau(\tfrac{i}{m}),\tau(\tfrac{j}{m}))\mathfrak B(\tau(\tfrac{m-i}{m}),\tau(\tfrac{m-j}{m})) \bmod \mu_{\infty}
\quad (\tau \in W_p)
\end{equation*}
in both cases.
Besides, we have (\ref{bpvsbc}), $\Phi_{\mathrm{cris}}((2\pi i)_p)=p(2\pi i)_p$, and $\Phi_\tau$ is $\tau$-semi linear, so we get
\begin{equation*}
\begin{split}
&\tau\left(\left(1-\tfrac{i}{m}-\tfrac{j}{m}\right)^*\frac{ B(\frac{i}{m},\frac{j}{m})B(\frac{m-i}{m},\frac{m-j}{m})}{\pi} \right) \\
&\equiv \left(1-\tau(\tfrac{i}{m})-\tau(\tfrac{j}{m})\right)^*\frac{ B(\tau(\frac{i}{m}),\tau(\frac{j}{m}))B(\tau(\frac{m-i}{m}),\tau(\frac{m-j}{m}))}{\pi} 
\bmod \mu_{\infty} \quad (\tau \in W_p).
\end{split}
\end{equation*}
Hence we obtain the desired formula $\tau (u_\mathbb Q(\sigma)) \equiv u_\mathbb Q(\tau \sigma) \bmod \mu_\infty$ 
for $\tau \in W_p$ by (\ref{suoverq}) and Lemma \ref{sisprofb}.
Since we can vary $p$ and the embedding $\overline{\mathbb Q} \hookrightarrow \overline{\mathbb Q_p}$, the same formula is valid 
for any $\tau \in \mathrm{Gal}(\overline{\mathbb Q}/\mathbb Q)$.
Namely, 

\begin{crl} \label{crl2}
{\rm (Rec$'$)} for $F=\mathbb Q$ holds true up to multiplication by a root of unity. That is, we have
\begin{equation*}
\tau (u_\mathbb Q(\sigma)) \equiv u_\mathbb Q(\tau \sigma) \bmod \mu_\infty \qquad 
(\sigma \in \mathrm{Gal}(\mathbb Q(\zeta_m)^+/\mathbb Q),\tau \in  \mathrm{Gal}(\overline{\mathbb Q}/\mathbb Q)).
\end{equation*}
\end{crl}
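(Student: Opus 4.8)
The plan is to deduce the corollary from the reciprocity law on the period ring-valued beta function (Theorem~\ref{main3}) in three moves: first symmetrize the two indices into a product, then trade the $B_{\mathrm{cris}}\overline{\mathbb Q_p}$-valued quantity for a genuine algebraic number by dividing out the $p$-adic period $(2\pi i)_p$, and finally upgrade the resulting congruence from the Weil group $W_p$ to all of $\mathrm{Gal}(\overline{\mathbb Q}/\mathbb Q)$ by a density argument.

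First I would symmetrize. Applying Theorem~\ref{main3} to both $(\frac{i}{m},\frac{j}{m})$ and $(\frac{m-i}{m},\frac{m-j}{m})$ and multiplying the two congruences, the factors $B_p\langle\cdot,\cdot\rangle$ (case (i)) reduce to $\pm1$ by Lemma~\ref{btog2}-(i), while the identity $\varepsilon(\frac{i}{m},\frac{j}{m})+\varepsilon(\frac{m-i}{m},\frac{m-j}{m})=1$ collapses the two prime-power prefactors into a single $p^{\deg\tau}$. Thus in both cases I arrive at
\begin{equation*}
\Phi_\tau\bigl(\mathfrak B(\tfrac{i}{m},\tfrac{j}{m})\mathfrak B(\tfrac{m-i}{m},\tfrac{m-j}{m})\bigr)\equiv p^{\deg\tau}\,\mathfrak B(\tau(\tfrac{i}{m}),\tau(\tfrac{j}{m}))\,\mathfrak B(\tau(\tfrac{m-i}{m}),\tau(\tfrac{m-j}{m}))\bmod\mu_\infty.
\end{equation*}

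Next I would invoke the key formula (\ref{bpvsbc}), which identifies the left-hand product, divided by $(2\pi i)_p$, with the algebraic number $(1-\frac{i}{m}-\frac{j}{m})^*\,B(\frac{i}{m},\frac{j}{m})B(\frac{m-i}{m},\frac{m-j}{m})/(2\pi i)$ up to $\mu_\infty$. The crucial cancellation is that $\Phi_\tau$ is $\tau$-semilinear and restricts to the honest Galois action $\tau$ on $\overline{\mathbb Q}\subset B_{\mathrm{cris}}\overline{\mathbb Q_p}$, whereas $\Phi_\tau((2\pi i)_p)=\Phi_{\mathrm{cris}}^{\deg\tau}((2\pi i)_p)=p^{\deg\tau}(2\pi i)_p$; hence the $p^{\deg\tau}$ produced above is absorbed exactly by the Frobenius eigenvalue of the period. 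Dividing the displayed relation by $\Phi_\tau((2\pi i)_p)$ therefore leaves a clean equivariance of algebraic numbers,
\begin{equation*}
\tau\left(\left(1-\tfrac{i}{m}-\tfrac{j}{m}\right)^*\frac{B(\frac{i}{m},\frac{j}{m})B(\frac{m-i}{m},\frac{m-j}{m})}{\pi}\right)\equiv\left(1-\tau(\tfrac{i}{m})-\tau(\tfrac{j}{m})\right)^*\frac{B(\tau(\frac{i}{m}),\tau(\frac{j}{m}))B(\tau(\frac{m-i}{m}),\tau(\frac{m-j}{m}))}{\pi}\bmod\mu_\infty,
\end{equation*}
valid for $\tau\in W_p$. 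By (\ref{suoverq}) the unit $u_\mathbb Q(\sigma_{\pm a/m})$ is a fixed power-product of the quantities $\Gamma(\frac{a}{m})\Gamma(\frac{m-a}{m})$, and Lemma~\ref{sisprofb} expresses these, up to sign, through products of the controlled type $B(\frac{i}{m},\frac{j}{m})B(\frac{m-i}{m},\frac{m-j}{m})$ (the diagonal $i=j$ values); substituting yields $\tau(u_\mathbb Q(\sigma))\equiv u_\mathbb Q(\tau\sigma)\bmod\mu_\infty$ for every $\tau\in W_p$.

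Finally I would remove the restriction $\tau\in W_p$. The congruence only sees the action of $\tau$ on a fixed cyclotomic field $\mathbb Q(\zeta_M)$ (with $M$ a suitable multiple of $m$ absorbing the $\mu_\infty$-ambiguity), so it suffices to realize every element of $\mathrm{Gal}(\mathbb Q(\zeta_M)/\mathbb Q)$ as $\tau|_{\mathbb Q(\zeta_M)}$ for some prime $p$, embedding $\overline{\mathbb Q}\hookrightarrow\overline{\mathbb Q_p}$, and $\tau\in W_p$ with $\deg\tau=1$; this is exactly Chebotarev's theorem for cyclotomic fields, each residue class in $(\mathbb Z/M\mathbb Z)^\times$ being the Frobenius of infinitely many primes. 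Varying $p$ and the embedding then extends the congruence to all $\tau\in\mathrm{Gal}(\overline{\mathbb Q}/\mathbb Q)$. I expect the main obstacle to be the middle step: verifying that after dividing by $(2\pi i)_p$ the semilinear Frobenius $\Phi_\tau$ genuinely becomes the arithmetic action $\tau$ on the algebraic part, with the single period $(2\pi i)_p$ carrying all of the weight $p^{\deg\tau}$ so that it cancels cleanly. This is where the comparison isomorphisms (\ref{ci1})--(\ref{ci4}) and the weight-one normalization $\Phi_{\mathrm{cris}}((2\pi i)_p)=p(2\pi i)_p$ do the essential work, whereas the concluding density step is routine.
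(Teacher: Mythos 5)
Your proposal follows the paper's own proof essentially step for step: multiply the two instances of Theorem~\ref{main3} for $(\tfrac{i}{m},\tfrac{j}{m})$ and $(\tfrac{m-i}{m},\tfrac{m-j}{m})$, using Lemma~\ref{btog2}-(i) and $\varepsilon(\tfrac{i}{m},\tfrac{j}{m})+\varepsilon(\tfrac{m-i}{m},\tfrac{m-j}{m})=1$ to obtain the $p^{\deg\tau}$ congruence; cancel that factor against $\Phi_{\mathrm{cris}}((2\pi i)_p)=p(2\pi i)_p$ via (\ref{bpvsbc}) and the $\tau$-semilinearity of $\Phi_\tau$; descend to $u_{\mathbb Q}$ via (\ref{suoverq}) and Lemma~\ref{sisprofb}; and finally vary $p$ and the embedding $\overline{\mathbb Q}\hookrightarrow\overline{\mathbb Q_p}$. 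Your only departure is to spell out the last step with Dirichlet/Chebotarev (realizing every class in $(\mathbb Z/m\mathbb Z)^\times$ as a degree-one Frobenius), which the paper leaves implicit; this is a correct elaboration rather than a different route.
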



\begin{thebibliography}{99}
%
%

\bibitem[Ar1]{Ar1} Artin, E., Einf\"uhrung in die Theorie der Gammafunktion, Teubner, 1931.

\bibitem[Ar2]{Ar2} Artin, E., The gamma function, Translated by Butler, M.,
\textit{Athena Series: Selected Topics in Mathematics}, Holt, Rinehart and Winston, New York-Toronto-London 1964. 

\bibitem[BO]{BO} Berthelot, P., and Ogus, A., F-isocrystals and de Rham cohomology. I,
\textit{Inv.\ Math.,}
\textbf{72} (1983), 159-199.

\bibitem[Co]{Co} Coleman, R., On the Frobenius matrices of Fermat curves, $p$-adic analysis,
\textit{Springer Lecture Notes in Mathematics,} vol. 1454, 1990, 173-193.

\bibitem[CM]{CM} Coleman, R.\ and McCallum W., Stable Reduction of Fermat Curves and Local Components of Jacobi Sum Hecke Characters, 
\textit{J.\ reine angew.\ Math.}
\textbf{385} (1988), 41-101

\bibitem[dS]{dS} de Shalit, E., On monomial relations between $p$-adic periods,
\textit{J.\ Reine Angew.\ Math.,}
\textbf{374} (1987), 193-207.

\bibitem[FG]{FG} Ferrero, B.\ and Greenberg, R., On the Behavior of $p$-adic $L$-Functions at $s=0$,
\textit{Inv.\ Math.,}
\textbf{50} (1978), 91-102.

\bibitem[Gr]{Gr} Gross, B., On the periods of abelian integrals and a formula of Chowla and Selberg (with an appendix by David E.\ Rohrlich),
\textit{Inv.\ Math.,} 
\textbf{45} (1978), 193-211. 

\bibitem[GK]{GK} Gross, B., Koblitz, N., Gauss Sums and the $\mathfrak p$-adic $\Gamma$-function,
\textit{Ann.\ of Math., (2)} 
\textbf{109(3)} (1979), 569–581.
 
\bibitem[Ka1]{Ka1} Kashio, T., On a $p$-adic analogue of Shintani's formula, 
\textit{J.\ Math.\ Kyoto Univ.\,}
\textbf{45} (2005), no. 1, 99-128.

\bibitem[Ka2]{Ka2} Kashio, T., Stark units, CM-periods and multiple gamma functions, 
\textit{RIMS K\^oky\^uroku,} \textbf{1659} (2009), ``Automorphic representations, automorphic $L$-functions and arithmetic'', 82-95.

\bibitem[KY1]{KY1} Kashio, T.\ and Yoshida, H., On $p$-adic absolute CM-Periods, I, 
\textit{American Journal of Mathematics,} vol.\ 130, no.\ 6 (2008), 1629-1685.

\bibitem[KY2]{KY2} Kashio, T.\ and Yoshida, H., On $p$-adic absolute CM-Periods, II, 
\textit{Arithmetic Algebraic Geometry, Publ.\ Res.\ Inst.\ Math.\ Sci.,} Volume 45, Number 1 (2009), 187-225.

\bibitem[Mo]{Mo} Morita, Y., A $p$-adic analogue of the $\Gamma$ function,
\textit{J.\ Fac.\ Sci.\ Univ.\ Tokyo Sect.\ IA Math.}
\textbf{22} (1975) no.\ 2, 255-266.

\bibitem[Og]{Og} Ogus, A., A $p$-adic Analogue of the Chowla-Selberg Formula, $p$-adic analysis,
\textit{Springer Lecture Notes in Mathematics,} vol. 1454, 1990, 319-341.

\bibitem[Ot]{Ot} Otsubo, N., On the regulator of Fermat motives and generalized hypergeometric functions, 
\textit{J.\ Reine Angew.\ Math.,} 660 (2011), 27-82. 

\bibitem[Shim]{Shim} Shimura, G., Abelian varieties with complex multiplication and modular functions,
\textit{Princeton Mathematical Series}
\textbf{46} (1998), Princeton University Press.

\bibitem[Shin]{Shin} Shintani, T., On values at $s=1$ of certain $L$ functions of totally real algebraic number fields, 
\textit{Algebraic Number Theory, Proc.\ International Sympos., Kyoto, 1976,} Kinokuniya, Tokyo, 1977, 201-212.

\bibitem[Yo]{Yo} Yoshida, H., Absolute CM-Periods,
\textit{Mathematical Surveys and Monographs,} vol.\ 106, 2003, AMS.

\end{thebibliography}
\end{document}